\newtheorem{thm}{Theorem}[section]
\newtheorem{lem}[thm]{Lemma}
\newtheorem{prop}[thm]{Proposition}
\newtheorem{cor}[thm]{Corollary}
\theoremstyle{definition}
\newtheorem{rem}[thm]{Remark}
\newtheorem{alg}[thm]{Algorithm}
\newtheorem{ass}[thm]{Assumptions}
\newcommand{\core}{\mbox{\upshape{core}}(I)}
\newcommand{\h}{\mbox{\upshape{ht} }}
\begin{document}


\title{A Formula for the Core of Certain \\ Strongly Stable Ideals}
\author{Bonnie Smith}
\address{Department of Mathematics, University of Kentucky, Lexington, KY  40506, USA}
\email{bonnie.smith@uky.edu}

\begin{abstract}
The core of an ideal is the intersection of all of its reductions.  The core has geometric significance coming, for example, from its connection to adjoint and multiplier ideals.  In general, though, the core is is difficult to describe explicitly.  In this paper, we investigate a particular family of strongly stable ideals.  We prove that ideals in this family satisfy an Artin-Nagata property, yet fail to satisfy other, stronger standard depth conditions.  We then show that there is a surprisingly simple explicit formula for the core of these ideals.
\end{abstract}

\keywords{cores, reductions, monomial ideals, strongly stable ideals, Artin-Nagata properties}

\subjclass[2010]{13B22, 13C40, 13F20, 05E40}

\maketitle

\begin{section}{Introduction}
The notion of the \emph{core} of an ideal was introduced by Rees and Sally \cite{ReesSally}, who defined the core of an ideal $I$ to be the intersection of all reductions of $I$.  An ideal $J \subset I$ is a \emph{reduction} of $I$ if $JI^r=I^{r+1}$ for some $r \geq 0$, or, equivalently, if the extension of Rees algebras $R[Jt]=R \oplus Jt \oplus J^2t^2 \oplus \ldots \subset R[It]$ is module-finite.  Reductions  are also connected to integral closure in the following way:  in a Noetherian ring, an ideal $J \subset I$ is a reduction of $I$ if and only if $\overline{J} = \overline{I}$, where $\overline{I}$ denotes the integral closure of $I$.  The smallest $r$ for which $JI^r=I^{r+1}$ is the \emph{reduction number} of $I$ with respect to $J$, denoted $r_J(I)$.  Intuitively, a minimal reduction (with respect to inclusion) $J$ of $I$ can be thought of as a simplification of $I$ which carries much of the information about $I$, with the invariant $r_J(I)$ providing a measure of how closely $J$ and $I$ are related.  The core, in turn, encodes properties which are common to all minimal reductions of the ideal.  Reductions were first studied by Northcott and Rees \cite{Northcott}, in the setting of a Noetherian local ring with infinite residue field $k$.  They showed that any ideal $I$ which is not its own unique minimal reduction will have infinitely many minimal reductions, but that these reductions will all have the same minimal number of generators.  This number is the \emph{analytic spread} of $I$, denoted $\ell(I)$ or simply $\ell$, which is equal to the dimension of the special fiber ring $\mathcal{F}(I)=R[It] \otimes k$.  From this fact and \cite{L-S} follows a first observation about the core:  if $I$ is an ideal in a regular local ring with infinite residue field, then $\overline{I^\ell} \subset \core$.  The analytic spread is an important invariant which plays a key role in the study of the core. 

The core is closely related to the adjoint ideal defined by Lipman \cite{Lipman}.  In settings where both are defined, adjoint ideals coincide with multiplier ideals, fundamental tools in Algebraic Geometry which encode information about singularities.  (See for example \cite{Laz} and its references.)  Lipman's result (with a generalization by Ulrich \cite{BerndAdjoint}) shows that, for an ideal $I$ in a regular local ring, adj$(I^\ell) \subseteq \core$;  and equality has been shown under certain conditions by Huneke and Swanson \cite{HunekeSwanson}, and in \cite{HyrySmith}, \cite{Angie}, \cite{KPU}, \cite{ formula} and \cite{PUV}.  Hyry and Smith \cite{HyrySmith} showed that the core also has further geometric significance:  if one were able to determine the specific shape of certain cores, this would lead to a proof of an open conjecture of Kawamata about nonvanishing of global sections.  Additionally, Fouli, Polini and Ulrich \cite{FPU} have proven that the core of the ideal of a finite set of points in $\mathbb{P}^r$ reads information about the position of the points.  Specifically, one can tell from the core whether or not the points have the Cayley-Bacharach property (that is, whether the Hilbert function of $n-1$ of the $n$ points is independent of which point is excluded).

Unfortunately, the core is quite difficult to describe explicitly, as it is in principle an intersection of infinitely many ideals.  However, Corso, Polini and Ulrich \cite{structure} have shown, under mild assumptions, that the core is actually a finite intersection of reductions, and that one may obtain the core of $I$ by intersecting finitely many general reductions of $I$.  The assumptions on the ideal $I$ needed for this result are (i) that $I$ have the $G_\ell$ property, where $\ell$ is the analytic spread of $I$, and (ii) that $I$ be weakly $(\ell-1)$-residually $S_2$.  (See Section 2 for definitions.)  These assumptions cannot be weakened:  \cite[Example 4.11]{structure} shows that the core of $I$ may be strictly smaller than the intersection of all general reductions of $I$, if $I$ does not have the $G_\ell$ property.  In contrast, suppose instead that $I$ is a monomial ideal which does satisfy assumptions (i) and (ii) above, and suppose also that $\ell(I)$ is equal to the dimension of $R$ (the largest value of $\ell(I)$ possible).  In this case, Polini and Ulrich \cite{Monom} have shown that there is a characterization of the core of $I$ in terms of a single reduction of $I$.  Such a characterization was first given by Polini, Ulrich and Vitulli \cite{PUV} in the case of zero-dimensional ideals.  We shall make use of this result of Polini and Ulrich, the statement of which is given in Section 2, in the proof of our main result (Theorem \ref{core thm}).

Corso, Polini and Ulrich \cite[Remark 5.1]{structure} have shown that the core of any monomial ideal is also monomial.  Given this fact, one might hope for a combinatorial description of the core of a monomial ideal, such as that for the adjoint of a monomial ideal given by Howald \cite{Howald}.  However, one faces the immediate obstacle that the core is usually not integrally closed.  Therefore, no description of the core of a monomial ideal in terms of the Newton polyhedron (such as one has for the adjoint of a monomial ideal) is possible.  The problem of describing the core becomes tractable, though, if we restrict our attention to particular classes of monomial ideals.  The main object of this paper is to give an explicit formula for the core of such a class of ideals, namely the strongly stable ideals of degree two having the $G_\ell$ property.  Let $R=k[X_1,\ldots,X_d]$ be a polynomial ring over a field $k$. A monomial ideal $I \subset R$ is \emph{strongly stable} if $m X_i / X_j \in I$ for all $i < j$, for every monomial $m \in I$ and every $j$ such that $X_j \mid m$.  A strongly stable ideal $I \subset R$ of degree two (that is, generated by elements of degree two) is necessarily non-squarefree.  For example, $X_1^2$ must be in $I$.  However, such an ideal can be thought of as the edge ideal of a graph with loops---an interpretation which we shall exploit in determining the analytic spread of $I$.  Strongly stable ideals of degree two have been studied recently by Corso and Nagel \cite{Ferrers1, Ferrers2} in their work on Ferrers ideals and their specializations.  Strongly stable ideals in general have received much attention because of their connection (in characteristic zero) with generic initial ideals (see for example \cite{Eis}).

We now indicate the structure of the paper.  In Section 2 we give necessary definitions and background.  In Section 3 we study properties of strongly stable ideals in our class, with an eye towards applying Polini and Ulrich's characterization of the core, which was mentioned above.  We show that our ideals satisfy the Artin-Nagata property $AN_{d-1}$, a condition on the Cohen-Macaulayness of certain residual intersections (see Section 2 for definitions)---a fact which is interesting in and of itself, since our ideals fail to have stronger standard depth conditions.  In Section 4 we lay the technical groundwork needed to prove our main result (Theorem \ref{core thm}), a surprisingly simple explicit formula for the core of our ideals, which is given in Section 5.  Throughout the paper, except where specifically stated, we shall work in a polynomial ring over a field of characteristic zero.  With this assumption on the characteristic of $k$, the core of a strongly stable ideal is also strongly stable.  We use this fact in the proof of one inclusion of our main theorem.

\end{section}

\begin{section}{Preliminaries}

We begin by reviewing some definitions and results relating to residual properties of ideals.  Let $R$ be a Cohen-Macaulay ring, and let $s$ be an integer.  An ideal $I \subset R$ is said to have the \emph{$G_s$ property} if $\mu(I_\mathfrak{p}) \leq \h \mathfrak{p}$ for every $p \in$ Spec$(R)$ such that $\mathfrak{p} \supset I$ and ht $\mathfrak{p} < s$.  An $i$-\emph{residual intersection} of $I$ is an ideal $J=\mathfrak{a}:I$ such that $\mathfrak{a} \subsetneq I$ is $i$-generated and $\h J \geq i \geq \h I$.  Residual intersections can be thought of as generalizations of linked ideals, where for example the heights of $J=\mathfrak{a}:I$ and $I$ need not be the same.  An example of how residual intersections arise was shown by Ulrich \cite[Proposition 1.11]{Hadley} (see also the formulation in \cite[Remark 2.7]{MarkJ}).  He proved, under mild assumptions, that if $\mathfrak{a}$ is any minimal reduction of an ideal $I$, then $\mathfrak{a}:I$ is an $\ell(I)$-residual intersection of $I$, where $\ell(I)$ is the analytic spread of $I$.  An ideal $I$ is said to be $s$-\emph{residually} $S_2$ if, for every $i \leq s$, for every $i$-residual intersection $J$ of $I$, the ring $R/J$ satisfies the $S_2$ property.  A ring $R$ satisfies the $S_2$ property if depth $R_{\mathfrak{p}} \geq \min\{ 2, \h \mathfrak{p}\}$ for every prime $\mathfrak{p}$ of $R$.  A related condition is the \emph{Artin-Nagata property} $AN_s$, which was defined by Ulrich \cite{Hadley}.  An ideal $I$ satisfies the Artin-Nagata property $AN_s$ if, for every $i \leq s$, for every $i$-residual intersection $J$ of $I$, the ring $R/J$ is Cohen-Macaulay.  Clearly this is stronger than the $s$-residually $S_2$ condition, as Cohen-Macaulay rings satisfy the $S_2$ property. 

Another notion which is closely connected to those above is the sliding depth property, which was defined by Herzog, Vasconcelos and Villarreal \cite{HVV}.  An ideal $I$ which is minimally generated by elements $f_1,\ldots,f_n$ is said to have \emph{sliding depth} if depth $H_i(f_1,\ldots,f_n) \geq d-n+i$ for every $i$, where $H_i(f_1,\ldots,f_n)$ denotes the $i$th Koszul homology module of $f_1,\ldots,f_n$.  A result of Herzog, Vasconcelos and Villarreal \cite[Theorem 3.3]{HVV}, with a modification by Ulrich \cite{BNotes}, shows that if an ideal $I$ satisfies the $G_s$ property and has sliding depth, then $I$ satisfies the Artin-Nagata property $AN_{s-1}$.

\medskip

Now let $R=k[X_1,\ldots,X_d]$ be a polynomial ring over an infinite field $k$ (of any characteristic).  Polini and Ulrich \cite{Monom} have shown that the core of certain monomial ideals can be characterized in terms of a single reduction.  We state their result here.  The term \emph{general locally minimal reduction} of $I$ denotes any ideal of the form $K=(f_1,\ldots,f_\ell,h^d)$, where $f_1,\ldots,f_\ell$ are general linear combinations of the generators of $I$, and $h$ is any element of $I$.  (See also \cite[3.3]{PUV} for a discussion of general locally minimal reductions in the zero-dimensional case.)  The result of Polini and Ulrich is as follows:  Let $I \subset R$ be a monomial ideal having the $G_\ell$ property.  Suppose also that $I$ is weakly $(\ell-1)$-residually $S_2$ and that $\ell=d$.  Then if $K$ is any general locally minimal reduction of $I$, then the largest monomial ideal contained in $K$ (denoted mono$(K)$) is equal to the core of $I$.

From a computational standpoint, the ideal mono$(K)$ can easily be computed with the computer algebra system Macaulay 2, using the commands \\ \tt minimalReduction \rm and \tt monomialSubideal\rm.  If $I$ is generated by monomials of the same degree, as in our case, then one may take $K=(f_1,\ldots,f_\ell)$, as this is (globally) a minimal reduction of $I$. 

\begin{ass}\label{ring}  From now on we let $R=k[X_1,\ldots,X_d]$, where $k$ is a field of characteristic zero.  Except where otherwise stated, $I \subset R$ will be a strongly stable ideal of degree two, and we shall assume that $X_1X_d \in I$.
\end{ass}

\begin{rem}\label{d}  For the purposes of computing the core, the assumption that $X_1X_d$ is in $I$ imposes no restriction.  Since $I$ is strongly stable, if $X_1X_d \notin I$, then $I$ can be thought of as the extension of an ideal $I' \subset R'=k[X_1,\ldots,X_{d'}]$, where $d' <d$ and $X_1X_{d'} \in I'$.  In this case, core$(I)=$ core$(I')R$, so that it suffices to compute core$(I')$.
\end{rem}

As explained in \cite{Ferrers2, Ferrers1}, to each ideal $I$ we may associate a tableau $T_I$.  Following the example of Corso and Nagel, we use the convention that $T_I$ has a square in the $i$th row, $j$th column whenever $X_iX_j \in I$ and $i \leq j$.  With this convention, $g= \h I$ is the number of rows in $T_I$, while, with Assumption \ref{ring}, $d$ is the number of columns in $T_I$.  Figure 1 depicts the tableau of the height 4 ideal 
\begin{align}
I=(X_1^2,X_1X_2,&X_1X_3,X_1X_4,X_1X_5,X_1X_6,X_2^2,X_2X_3, \notag \\
&X_2X_4,X_2X_5,X_2X_6,X_3^2,X_3X_4,X_3X_5,X_3X_6,X_4^2) \notag 
\end{align}
 in the ring $ R=k[X_1,X_2,X_3,X_4,X_5,X_6]$.

\begin{figure}\label{T}

\begin{center}

\begin{tikzpicture}[scale=.6]

\fill[gray!30!white] (4.2,1.8) rectangle (4.8,1.2);

\draw (-1,4)--(5,4);
\draw (-1,3)--(5,3);
\draw (0,2)--(5,2);
\draw (1,1)--(5,1);
\draw (2,0)--(3,0);

\draw (-1,4)--(-1,3);
\draw (0,4)--(0,2);
\draw (1,4)--(1,1);
\draw (2,4)--(2,0);
\draw (3,4)--(3,0);
\draw (4,4)--(4,1);
\draw (5,4)--(5,1);

\draw (-1,4) node[anchor=south west]{$X_1$};
\draw (0,4) node[anchor=south west]{$X_2$};
\draw (1,4) node[anchor=south west]{$X_3$};
\draw (2,4) node[anchor=south west]{$X_4$};
\draw (3,4) node[anchor=south west]{$X_5$};
\draw (4,4) node[anchor=south west]{$X_6$};

\draw (-1,3) node[anchor=south east]{$X_1$};
\draw (-1,2) node[anchor=south east]{$X_2$};
\draw (-1,1) node[anchor=south east]{$X_3$};
\draw (-1,0) node[anchor=south east]{$X_4$};

\end{tikzpicture}

\end{center}

\caption{The tableau associated to a strongly stable ideal $I$ of degree two}

\end{figure}

Before proceeding, we observe that, with $I$ and $R$ as in Assumptions \ref{ring}, the analytic spread $\ell(I)$ is equal to the dimension $d$ of $R$ by a result of Villarreal:  as mentioned in Section 1, $I$ can be thought of as the edge ideal of a non-simple graph $G$.  The \emph{edge ideal} of a graph $G$ on vertices $v_1,\ldots,v_d$ is the ideal $(\{X_iX_j \mid (v_i,v_j) \mbox{ is an edge of } G\})$.  Another algebraic object which one can associate to $G$ is the \emph{monomial subring} of $G$, $k[G]:=k[\{X_iX_j \mid (v_i,v_j)$ is an edge of $ G\}]$.  If $I$ is the edge ideal of $G$, then $k[G] \cong \mathcal{F}(I)$, where $\mathcal{F}(I)$ is the special fiber ring of $I$.  Villarreal \cite[Corollary 8.2.13 and Exercise 8.2.16]{Vill} showed that if $G$ is any connected, non-bipartite graph (possibly having loops) on $d$ vertices, then $\dim k[G] = d$.  As $\ell(I)=\dim \mathcal{F}(I)$, this shows that $\ell(I)=d$ whenever $I$ is the edge ideal of such a graph.  Finally, note that strongly stable ideals satisfying Assumptions \ref{ring} are always non-bipartite and connected.

\medskip

We conclude this section with a result about strongly stable ideals, not necessarily of degree two, which we shall use in the proof of our main result, Theorem \ref{core thm}.  Since we assume that $k$ has characteristic zero, the strongly stable ideals of $R=k[X_1,\ldots,X_d]$ are precisely those ideals which are fixed under the action of invertible upper-triangular $d \times d$ matrices with entries in $k$.  (See for example \cite[Proposition 2.3]{Miller}.)  Analogously, monomial ideals are precisely the ideals which are fixed under the action of invertible diagonal matrices.  This fact has been used to show that the core of a monomial ideal is also monomial (\cite[Remark 5.1]{structure}).  Here we use the same approach to show that the core of a strongly stable ideal is also strongly stable. 

\begin{prop}\label{core ss} Let $I \subset R$ be strongly stable.  Then $\core$ is also strongly stable.
\end{prop}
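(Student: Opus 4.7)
The plan is to mimic the proof that the core of a monomial ideal is monomial, substituting the Borel subgroup of invertible upper-triangular matrices for the diagonal torus. As the paper just noted, in characteristic zero a monomial ideal is strongly stable precisely when it is fixed by the action of every invertible upper-triangular matrix $g \in B_d(k)$ acting as a linear change of variables on $R = k[X_1,\ldots,X_d]$. So it suffices to show that $\mathrm{core}(I)$ is fixed under this action whenever $I$ is.

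First I would fix an arbitrary $g \in B_d(k)$. Since $g$ acts as a $k$-algebra automorphism of $R$, it sends ideals to ideals and commutes with arbitrary intersections. Next I would check that $g$ permutes the set of reductions of $I$: if $J \subset I$ satisfies $JI^r = I^{r+1}$, then applying $g$ yields $(g \cdot J)(g \cdot I)^r = (g \cdot I)^{r+1}$, and since $g \cdot I = I$ by assumption, $g \cdot J$ is again a reduction of $I$. The inverse $g^{-1}$ gives the other inclusion, so $J \mapsto g \cdot J$ is a bijection on the set of reductions of $I$.

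Combining these two observations, I would compute
\[
g \cdot \mathrm{core}(I) \;=\; g \cdot \bigcap_{J} J \;=\; \bigcap_{J} (g \cdot J) \;=\; \bigcap_{J'} J' \;=\; \mathrm{core}(I),
\]
where $J$ ranges over reductions of $I$ and the third equality is the bijection above (with $J' = g \cdot J$). Since $g \in B_d(k)$ was arbitrary, $\mathrm{core}(I)$ is Borel-fixed, hence strongly stable by the characterization invoked from \cite[Proposition 2.3]{Miller}.

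There is no real obstacle here; the entire content lies in having the Borel-fixed characterization available, and in recognizing that ``being a reduction'' is a condition expressible entirely in terms of ideal-theoretic operations that are preserved by any ring automorphism. The only point to double-check is that the intersection defining the core really is preserved by $g$, which is automatic since $g$ is a bijection on $R$.
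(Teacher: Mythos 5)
Your proposal is correct and follows the same overall strategy as the paper: show that $\core$ is Borel-fixed by showing that each upper-triangular automorphism $\varphi_A$ permutes the set of reductions of $I$, then invoke the characterization of strongly stable ideals as Borel-fixed ideals in characteristic zero. The one place you diverge is in verifying that $\varphi_A(J)$ is again a reduction of $I$. You simply apply the automorphism $\varphi_A$ to the defining equation $JI^r = I^{r+1}$ and use $\varphi_A(I) = I$, which is the most direct route and uses nothing beyond the fact that $\varphi_A$ is a ring automorphism. The paper instead uses the integral-closure characterization of reductions ($J$ is a reduction of $I$ iff $\overline{J} = \overline{I}$), which requires first establishing that $\varphi_A(\overline{I}) = \overline{I}$ via the persistence of integral closure; this costs a short extra argument but also records, as a byproduct, that $\overline{I}$ is itself strongly stable. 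Since $\varphi_A$ is a genuine automorphism (not merely a ring map), your more elementary verification is perfectly valid, and is arguably the cleaner way to see the claim.
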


\begin{proof} Let $A=(a_{ij})$ be an invertible $d \times d$ upper-triangular matrix with entries in $k$.  Let $\varphi_A:R \rightarrow R$ be the homomorphism induced by $A$, that is, the homomorphism defined by $\varphi_A(X_j)=\sum_{i=1}^da_{ij}X_i$.  Since $I$ is strongly stable, $\varphi_A(I)=I$.  Therefore, by the persistence of integral closure (see \cite{HSBook}), $\overline{I}= \varphi_A \circ \varphi_{A^{-1}} (\overline{I}) \subseteq \varphi_A (\overline{\varphi_{A^{-1}}(I)})  = \varphi_A(\overline{I}) \subseteq \overline{\varphi_A(I)}=\overline{I}$.  Thus $\varphi_A(\overline{I})=\overline{I}$, which shows that $\overline{I}$ is strongly stable.  Now let $J$ be a reduction of $I$.  Observe that $\varphi_A(J) \subseteq \varphi_A(I)=I$, and, by what we have just shown, $\overline{I}=\varphi_A(\overline{I})=\varphi_A(\overline{J}) \subseteq \overline{\varphi_A(J)}$.  Hence $\varphi_A(J)$ is a reduction of $I$.  Furthermore, $J'=\varphi_{A^{-1}}(J)$ is also a reduction of $I$, and $\varphi_A(J')=J$.  Therefore $\core$ is fixed under the action of $A$, which shows that $\core$ is strongly stable.
\end{proof}

\end{section}

\begin{section}{Artin-Nagata Properties of a Strongly Stable Ideal}

In this section we investigate properties of a strongly stable ideal $I$ as in Assumption \ref{ring}.  Recall from Section 2 that, if $I$ satisfies the $G_d$ property and is weakly $(d-1)$-residually $S_2$, then the core of $I$ may be characterized in terms of a single reduction of $I$.  We begin by giving a necessary and sufficient condition under which $I$ will satisfy the $G_d$ property.  Somewhat surprisingly, this is a condition on only a single element of $R$.

\begin{prop}\label{Gd}

Let $I \subset R$ be a strongly stable ideal of degree two of height $g$.  Then $I$ satisfies the $G_d$ property if and only if $X_{g-1}X_d \in I$.

\end{prop}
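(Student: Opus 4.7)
The plan is to rephrase the $G_d$ condition as a combinatorial inequality on the tableau $T_I$ and settle each direction by understanding the ``closed'' subsets of a vertex cover. By upper semicontinuity of $\mu(I_{-})$ together with torus invariance of $I$, the set of primes where $\mu(I_\mathfrak{p})$ exceeds its $G_d$ bound is closed and torus-invariant, hence defined by a monomial ideal, so it suffices to verify $\mu(I_{\mathfrak{p}_S})\leq|S|$ at every monomial prime $\mathfrak{p}_S=(X_i:i\in S)$ with $\mathfrak{p}_S\supset I$ and $|S|<d$. Setting $N(i)=\{j:X_iX_j\in I\}$ and $U=\{i\in S:N(i)\subseteq S\}$, the variables indexed outside $S$ become units in $R_{\mathfrak{p}_S}$, so each generator $X_iX_j\in I$ with exactly one index in $S$ collapses to a linear generator, and a short count gives
\[
\mu(I_{\mathfrak{p}_S})=|S|-|U|+|E[U]|,\quad E[U]:=\{X_iX_j\in I:i,j\in U\}.
\]
Thus $G_d$ at $\mathfrak{p}_S$ becomes the inequality $|E[U]|\leq|U|$. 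Also, any admissible $S$ must contain $\{1,\ldots,g\}$, since $X_i^2\in I$ for $i\leq g$.

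Translating to tableau data, write $r_i$ for the rightmost column of row $i$ of $T_I$, so $r_1\geq\cdots\geq r_g\geq g$, $r_1=d$, $N(i)=\{1,\ldots,r_i\}$ for $i\leq g$, and $N(i)\subseteq\{1,\ldots,g\}$ for $i>g$. The hypothesis $X_{g-1}X_d\in I$ is exactly $r_{g-1}=d$. For the $(\Leftarrow)$ direction, $r_{g-1}=d$ forces $N(i)=\{1,\ldots,d\}$ for every $i\leq g-1$, so no such $i$ lies in $U$ when $|S|<d$. Hence $U\subseteq\{g\}\cup(S\cap\{g+1,\ldots,d\})$, and vertices above $g$ contribute no edges among themselves. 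A short case-split on whether $g\in U$ (the condition being $\{1,\ldots,r_g\}\subseteq S$) yields $|E[U]|\leq|U|$: either $E[U]=\varnothing$, or $E[U]=\{X_g^2,X_gX_{g+1},\ldots,X_gX_{r_g}\}$ while $|U|\geq 1+(r_g-g)$ by the containment $\{g+1,\ldots,r_g\}\subseteq S$.

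For the $(\Rightarrow)$ direction I would exhibit a single witness. Set $c:=r_{g-1}$ (so $g\leq c\leq d-1$) and take $S=\{1,\ldots,c\}$; this is admissible since $\{1,\ldots,g\}\subseteq S$ and $|S|=c<d$. Let $i_0:=\min\{i:r_i\leq c\}$; because $r_1=d>c$ and $r_{g-1}=c$, one has $2\leq i_0\leq g-1$, and monotonicity forces $r_{i_0}=\cdots=r_{g-1}=c$. A direct analysis then shows $U=\{i_0,\ldots,c\}$, so $|U|=c-i_0+1$. The punchline is that row $i_0$ alone contributes $r_{i_0}-i_0+1=c-i_0+1=|U|$ squares to $E[U]$, while row $i_0+1$ (which lies in the tableau since $i_0+1\leq g$) contributes at least the loop $X_{i_0+1}^2$; hence $|E[U]|\geq|U|+1$, breaking $G_d$ at $\mathfrak{p}_S$. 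The main obstacle is really identifying the correct witness $S$: the naive choices $\{1,\ldots,g\}$ or $\{1,\ldots,d-1\}$ need not violate $G_d$ in general, and the cut-off at column $r_{g-1}$ is exactly what makes the top nonempty row of the restricted tableau have width equal to $|U|$, so that the presence of a second nonempty row forces the edge count to overshoot $|U|$ by at least one.
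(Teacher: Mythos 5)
Your proof is correct, and its organizing principle differs from the paper's in a worthwhile way. You first reduce the $G_d$ check to monomial primes via upper semicontinuity of $\mu(I_{-})$ and torus invariance (noting that the minimal primes of the torus-invariant closed locus $\{\mathfrak{p}:\mu(I_\mathfrak{p})\geq m\}$ are monomial), and then package the local computation into the clean formula $\mu(I_{\mathfrak{p}_S})=|S|-|U|+|E[U]|$. The paper skips the reduction entirely: in the $(\Leftarrow)$ direction it works with an \emph{arbitrary} prime $\mathfrak{p}\supset I$ of height $<d$, picks $X_j\notin\mathfrak{p}$, deduces $X_1,\ldots,X_{g-1}\in I_\mathfrak{p}$, and finishes by casing on whether $\mathfrak{p}\supseteq(X_1,\ldots,X_n)$ with $n=\max\{b:X_gX_b\in I\}$. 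In the $(\Rightarrow)$ direction you both exhibit the same witnessing prime, $S=\{1,\ldots,r_{g-1}\}$ (the paper's $s=\max\{j:X_{g-1}X_j\in I\}$ is your $c$, and its $t=\min\{i:X_iX_{s+1}\notin I\}$ is your $i_0$); the paper lists an explicit $(s+1)$-element subset of a minimal generating set of $I_\mathfrak{p}$, while you count $|E[U]|\geq|U|+1$, which is the same count in different clothing. Your approach buys a uniform combinatorial criterion that avoids ad hoc generator-listing and makes the correct choice of witness conceptually transparent (as your remark about the naive witnesses failing shows); the paper's is shorter because it never needs the reduction to monomial primes. One small expository gap in your $(\Leftarrow)$ case $g\in U$: to conclude $E[U]=\{X_g^2,\ldots,X_gX_{r_g}\}$ you must know $\{g,\ldots,r_g\}\subseteq U$, not merely $\{g+1,\ldots,r_g\}\subseteq S$; this holds because, for $g<i\leq r_g$, $N(i)=\{1,\ldots,g\}\subseteq S$, but that line of justification is missing. (Alternatively, you can skip establishing the exact set and just observe $E[U]\subseteq\{X_gX_j:j\in U\}$, which gives $|E[U]|\leq|U|$ directly.)
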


\begin{proof}
Suppose first that $X_{g-1}X_d \notin I$.  Set $s= \max \{ j: X_{g-1}X_j \in I\}$, and set $\mathfrak{p}=(X_1,\ldots,X_s)$.  Since $\h I =g$ and $I$ is strongly stable, $X_g^2 \in I$, hence $X_{g-1}X_g \in I$.  Therefore $g \leq s < d$, and thus $\mathfrak{p} \supset I$ and $\h \mathfrak{p} < d$.  We will show that $\mu(I_\mathfrak{p})>s = \h \mathfrak{p}$.  Set $t=$ min$\{i: X_iX_{s+1} \notin I\} \leq g-1$.  For $i \geq t$, $X_iX_j \notin I$ for all $j > s$ since $I$ is strongly stable, hence $X_{i} \notin I_\mathfrak{p}$.  For $i < t$, $X_i \in I_{\mathfrak{p}}$.  Therefore a minimal generating set of $I_\mathfrak{p}$ contains the set $\{X_1,\ldots,X_{t-1},X_{t}^2,\ldots,X_{g-1}^2,X_{g-1}X_g,\ldots,X_{g-1}X_s,X_g^2\}$, which has size $s+1$. 

Now suppose that $X_{g-1}X_d \in I$.  Let $\mathfrak{p} \supset I$ with ht $\mathfrak{p} < d$.  Let $X_j \notin \mathfrak{p}$.  Note that $X_iX_j \in I$ for all $i \leq g-1$, since $I$ is strongly stable.  Therefore, $X_i \in I_\mathfrak{p}$ for all $i \leq g-1$.  Write $n= \max\{b: X_gX_b \in I\}$.  If $X_k \notin \mathfrak{p}$ for some $k \leq n$, then $I_{\mathfrak{p}}=(X_1,\ldots,X_g)$.  Otherwise, if $\mathfrak{p} \supset (X_1,\ldots,X_n)$, then $\mu(I_\mathfrak{p})=n \leq \h \mathfrak{p}$.
\end{proof}

In terms of the tableau $T_I$ associated to $I$, Proposition \ref{Gd} shows that we need only look for the square in the last column of the second row from the bottom in $T_I$.  The presence of the shaded square in Figure 1 shows that the corresponding ideal has the $G_d$ property.  

The remainder of this section we devote to showing that every strongly stable ideal $I$ of degree two which satisfies the $G_d$ property satisfies the Artin-Nagata property $AN_{d-1}$.  (Recall that this is a stronger condition than the $(d-1)$-residually $S_2$ condition.)  We begin with a result about the saturation of $I$.

\begin{prop}\label{SD} If $I$ is a strongly stable ideal of degree two which satisfies the $G_d$ property, then the saturation of $I$ has the sliding depth property.

\end{prop}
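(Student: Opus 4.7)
The plan is to first give an explicit description of $I^{\mathrm{sat}}$ using the tableau and strong-stability structure, and then compute its Koszul homology by reducing modulo a natural regular sequence. Let $n = \max\{b : X_gX_b \in I\}$. Strong stability together with the $G_d$ hypothesis $X_{g-1}X_d \in I$ forces rows $1, \ldots, g-1$ of the tableau to be full, while rows $i$ with $i > g$ are empty (any $X_iX_j \in I$ with $i,j \geq g+1$ would, by strong stability, yield $X_{g+1}^2 \in I$, contradicting $\h I = g$). If $n = d$, then $X_g\,\mathfrak{m} \subseteq I$, so $X_g \in I^{\mathrm{sat}}$; combined with $X_1, \ldots, X_{g-1} \in I^{\mathrm{sat}}$, this forces $I^{\mathrm{sat}}$ to be either the complete intersection $(X_1, \ldots, X_g)$ (when $g < d$) or all of $R$ (when $g = d$), in either case trivially satisfying sliding depth. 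The substantive case is $n < d$, where a direct monomial check gives
\[I^{\mathrm{sat}} = (X_1, \ldots, X_{g-1}, X_g^2, X_gX_{g+1}, \ldots, X_gX_n);\]
the reverse inclusion follows because for any monomial $m \in k[X_g, \ldots, X_d]$ not in $I$, choosing $k \in \{n+1, \ldots, d\}$ produces $X_k^N m$ with no degree-two divisor in $I$, so $m \notin I^{\mathrm{sat}}$.

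Assuming $n < d$, I will reduce the Koszul computation modulo $X_1, \ldots, X_{g-1}$. Write $S := R/(X_1, \ldots, X_{g-1}) \cong k[X_g, \ldots, X_d]$, and set $Y = X_g$, $Z_i = X_{g+i}$ for $1 \leq i \leq m := n-g$, and $W_j = X_{n+j}$ for $1 \leq j \leq p := d-n$; then the image of $I^{\mathrm{sat}}$ in $S$ is $Y\mathfrak{q}$ with $\mathfrak{q} = (Y, Z_1, \ldots, Z_m)$. Since $X_1, \ldots, X_{g-1}$ is a regular sequence contained in $I^{\mathrm{sat}}$, a standard spectral-sequence argument (tensoring the Koszul resolution of $S$ over $R$ with the Koszul complex of the remaining generators of $I^{\mathrm{sat}}$) gives $H_i(I^{\mathrm{sat}}; R) \cong H_i(Y\mathfrak{q}; S)$, and the depth over $R$ equals the depth over $S$ because $X_1, \ldots, X_{g-1}$ annihilate these homologies. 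The key observation next is that the Koszul differential $d$ of $Y^2, YZ_1, \ldots, YZ_m$ factors at each degree as $d = Y \cdot d'$, where $d'$ is the Koszul differential of $\mathfrak{q}$. Since $Y$ is a nonzerodivisor on each $\bigwedge^j S^{m+1}$ and on each syzygy of $\mathfrak{q}$, and $\mathfrak{q}$ is a regular sequence (so $\ker d'_k = \operatorname{im} d'_{k+1}$ for $k \geq 1$), one obtains
\[H_k(Y\mathfrak{q};S) \cong \Omega_{k+1}(S/\mathfrak{q})/Y\,\Omega_{k+1}(S/\mathfrak{q}) \qquad \text{for } k \geq 1,\]
with $H_k = 0$ for $k \geq m+1$. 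The Auslander-Buchsbaum formula applied to the Koszul resolution of the Cohen-Macaulay module $S/\mathfrak{q} \cong k[W_1, \ldots, W_p]$ gives $\operatorname{depth}\Omega_{k+1}(S/\mathfrak{q}) = p+k+1$; quotienting by the nonzerodivisor $Y$ drops depth by one, yielding $\operatorname{depth} H_k(Y\mathfrak{q}) = p+k$ for $1 \leq k \leq m$. For $H_0 = S/Y\mathfrak{q}$, one checks directly that $W_1, \ldots, W_p$ is a regular sequence, so the depth is at least $p$. All cases then satisfy the sliding-depth inequality $\operatorname{depth} H_k \geq d - N + k = p + k$, where $N = n$ is the number of generators of $I^{\mathrm{sat}}$.

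The main conceptual obstacle will be the explicit identification of $I^{\mathrm{sat}}$ via the casework on whether $X_g X_d$ lies in $I$; once this description is in hand, the Koszul calculation is essentially formal, thanks to the factorization $d = Y d'$.
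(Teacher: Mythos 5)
Your proof is correct and follows essentially the same strategy as the paper: identify $I^{\mathrm{sat}}$ explicitly, reduce modulo the regular sequence $X_1,\ldots,X_{g-1}$ (HVV Lemma 3.5), and exploit the factorization of the Koszul differential of $X_g(X_g,\ldots,X_n)$ as $X_g$ times the Koszul differential of the regular sequence $X_g,\ldots,X_n$. The one place you deviate is the final step: the paper invokes Herzog--Vasconcelos--Villarreal's reformulation of sliding depth as a depth bound on $\ker\partial_i$, then observes $\ker\partial_i^{\underline{\alpha}}=\ker\partial_i^{\underline{\beta}}$ and is done, whereas you push further and compute $H_k(Y\mathfrak{q})\cong\Omega_{k+1}(S/\mathfrak{q})/Y\,\Omega_{k+1}(S/\mathfrak{q})$ explicitly and read off depths via Auslander--Buchsbaum; this is slightly more work but correct, and has the mild payoff of giving a concrete description of the Koszul homology modules rather than just their depths.
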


\begin{proof}

Let $\mathfrak{m}=(X_1,\ldots,X_d)$, and let $g = \h I$.  By Proposition \ref{Gd}, we can write $I=(X_1,\ldots,X_{g-1})\mathfrak{m}+X_g(X_g,\ldots,X_\nu)$ for some $g \leq \nu \leq d$.  Let $I'$ be the saturation of $I$, which is given by
\[ I'= \begin{cases}
(X_1,\ldots,X_{g-1})+X_g(X_g,\ldots,X_\nu), & \mbox{ if } \nu < d \\
(X_1,\ldots,X_g), & \mbox{ if } \nu=d. 
\end{cases} \]
 If $\nu=d$, then $I'$ is generated by a regular sequence.  In this case it is clear that $I'$ has sliding depth since its Koszul complex is exact.  Now suppose instead that $\nu < d$.  Herzog, Vasconcelos and Villarreal \cite[Lemma 3.5]{HVV} have proven that an ideal $L \subset R$ has sliding depth if and only if, after going modulo a regular sequence in $L$, the corresponding statement holds.  Therefore, it suffices to show that $X_g(X_g,\ldots,X_\nu) \subset R'=k[X_g,\ldots,X_d]$ has sliding depth.  

We compare the Koszul complex of $\underline{\alpha}=X_g^2, X_gX_{g+1}, \ldots, X_gX_\nu$ with that of $\underline{\beta}=X_g,\ldots,X_\nu$.  As Herzog, Vasconcelos and Villarreal \cite[Section 1]{HVV} have pointed out, sliding depth can also be characterized in terms of the kernels of the Koszul maps.  Specifically, an ideal $L \subset R$ minimally generated by $f_1,\ldots,f_n$ has sliding depth if 
\begin{align} \mbox{depth Ker}(\partial_i) \geq \min\{d,d-n+i+1\} \mbox{ for all } i,
\end{align}
where $\partial_i$ is the $i$th Koszul map of $f_1,\ldots,f_n$.  

As the Koszul complex $K_\bullet(\underline{\beta})$ is exact, the kernels of the maps of $K_\bullet (\underline{\beta})$ satisfy equation (1).  Now observe that each map of $K_\bullet(\underline{\alpha})$ is simply $X_g$ times the corresponding map of $K_\bullet(\underline{\beta})$.  Since $X_g$ is a regular element, this shows that the kernel of the $i$th map of $K_\bullet(\underline{\alpha})$ is equal to the kernel of the $i$th map of $K_\bullet(\underline{\beta})$ with a degree shift.  Therefore $(\underline{\alpha})$ also satisfies equation (1), and hence has sliding depth.
\end{proof} 

\begin{thm}\label{ANI} Let $I$ be a strongly stable ideal of degree two.  If $I$ satisfies the $G_d$ property, then $I$ satisfies the Artin-Nagata property $AN_{d-1}$.

\end{thm}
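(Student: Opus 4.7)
The plan is to leverage Proposition \ref{SD}: the saturation $I'$ has sliding depth, so combined with the Herzog--Vasconcelos--Villarreal theorem (as modified by Ulrich), this yields $AN_{d-1}$ for $I'$. The $AN_{d-1}$ property will then be transferred from $I'$ to $I$ by exploiting the fact that $I$ and $I'$ agree outside the maximal ideal $\mathfrak{m}$.

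First I would verify that $I'$ satisfies $G_d$. Since $I' = I:\mathfrak{m}^\infty$, the two ideals agree after localization at every prime $\mathfrak{p}\ne\mathfrak{m}$, and in particular at every prime of height less than $d$; the $G_d$ condition on $I$ (available via Proposition \ref{Gd}) therefore passes to $I'$. Combined with sliding depth, the HVV/Ulrich theorem gives that $I'$ satisfies $AN_{d-1}$.

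Now consider an $i$-residual intersection $J=\mathfrak{a}:I$ of $I$ with $i\leq d-1$, and set $J'=\mathfrak{a}:I'$. If $\h J = d$, then $J$ is $\mathfrak{m}$-primary and $R/J$ is automatically Cohen--Macaulay. Otherwise $\h J < d$, so $J$ has a minimal prime $\mathfrak{q}\ne\mathfrak{m}$; using $I_\mathfrak{q}=I'_\mathfrak{q}$ and the agreement of $\mathrm{Ass}(R/J)$ and $\mathrm{Ass}(R/J')$ away from $\mathfrak{m}$, one checks that $J$ and $J'$ share the same minimal primes, whence $\h J' = \h J \geq i$. Thus $J'$ is an $i$-residual intersection of $I'$ (the auxiliary conditions $\mathfrak{a}\subsetneq I'$ and $\h I' = \h I \leq i$ are immediate). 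By $AN_{d-1}$ on $I'$, the quotient $R/J'$ is Cohen--Macaulay of dimension $\geq 1$, so has positive depth and no $\mathfrak{m}$-torsion submodules. Since $\mathfrak{m}^N I'\subseteq I$ for some $N$, any $y\in J$ satisfies $\mathfrak{m}^N y I'\subseteq yI\subseteq \mathfrak{a}$, whence $J/J'$ is an $\mathfrak{m}$-torsion submodule of $R/J'$ and must vanish. Hence $J=J'$, and $R/J$ is Cohen--Macaulay. The crux of the argument is precisely this last identification: the $\mathfrak{m}$-torsion of $J/J'$ has to be balanced against the positive depth of $R/J'$ delivered by $AN_{d-1}$ to rule out any $\mathfrak{m}$-embedded component separating $J$ from $J'$.
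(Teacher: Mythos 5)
Your proof is correct, and the overall plan coincides with the paper's: reduce to the saturation $I'$ (which agrees with $I$ on the punctured spectrum, so inherits $G_d$), establish $AN_{d-1}$ for $I'$ from sliding depth (Proposition \ref{SD}) via Herzog--Vasconcelos--Villarreal as modified by Ulrich, and transfer the property back to $I$. The one place you deviate is the transfer step itself: the paper simply invokes Ulrich's Remark 1.12 of \cite{Hadley} (taking there $\mathfrak{a}:=I$, $I:=I'$, $s=d-1$, using that $\h I:I' = d$), whereas you prove the needed case of that lemma by hand. Your direct argument --- show $J':=\mathfrak{a}:I'$ is an $i$-residual intersection of $I'$ by comparing minimal primes off $\mathfrak{m}$, note $J/J'$ is $\mathfrak{m}$-torsion because $\mathfrak{m}^N I'\subseteq I$, then use that the Cohen--Macaulay module $R/J'$ has positive depth to force $J/J'=0$ --- is sound, and a reader who checks details should be satisfied that each inequality goes through, including that $J'\neq R$ (since $\mathfrak{a}\subsetneq I\subseteq I'$) and that the $\mathfrak{m}$-primary case $\h J=d$ is trivial since $R/J$ is then Artinian. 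So what you gain is a self-contained, more elementary proof of the reduction step, at the modest cost of replacing a one-line citation with several lines of bookkeeping; both approaches rest equally on Proposition \ref{SD} and the HVV criterion to handle $I'$.
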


\begin{proof}

Ulrich \cite[Remark 1.12]{Hadley} has shown the following:  let $s$ be an integer, let $\mathfrak{a} \subset I$ be ideals in a local Cohen-Macaulay ring with ht $\mathfrak{a}:I \geq s+1$, and suppose that $I$ satisfies the $G_s$ property.  Then if $I$ satisfies $AN_s$, then $\mathfrak{a}$ satisfies $AN_s$ as well.  

Recall that $I$ and its saturation $I'$ are equal locally on the punctured spectrum.  Thus if $I$ has the $G_d$ property, then clearly $I'$ does, as $I_{\mathfrak{p}}=(I')_{\mathfrak{p}}$ for all $\mathfrak{p}$ with $\h \mathfrak{p}< d$.  This fact also shows immediately that ht $I:I'=d$.  Therefore, it suffices to check that $I'$ satisfies the Artin-Nagata property $AN_{d-1}$.  A result of Herzog, Vasconcelos and Villarreal \cite[Theorem 3.3]{HVV} modified by Ulrich \cite{BNotes} shows that $I'$ satisfies the $AN_{d-1}$ property if it has sliding depth and satisfies the $G_d$ property.  Therefore Proposition \ref{SD} completes the proof.
\end{proof}

We conclude this section by remarking that many known examples of ideals which which satisfy the $AN_{d-1}$ property also satisfy a stronger depth condition on the powers of the ideal, namely that depth $R/I^j \geq \dim R/I -j+1$ for all $j$ with $1 \leq j \leq d- \h I$.  Many standard examples are even strongly Cohen-Macaulay (that is, all of their Koszul homology modules are Cohen-Macaulay), a property which was introduced by Huneke \cite{CraigSCM}.  In contrast, our strongly stable ideals have depth zero, but positive dimension (with the exception of the ideal $I=(X_1,\ldots,X_d)^2$), and therefore fail to satisfy these stronger depth conditions.

\end{section}

\begin{section}{The Diagonal Reduction}

In this section we introduce an ideal $J$ associated to a strongly stable ideal $I$ of degree two, which will be fundamental to our computation of the core of $I$.  This ideal $J$ has also been studied by Corso, Nagel, Petrovic and Yuen \cite{CNPY}.  The content of this section will lay the groundwork for one inclusion in our main result, Theorem \ref{core thm}.

Let $I$ be a strongly stable ideal of degree two.  We define the \emph{diagonal ideal associated to $I$} to be the ideal
\[J= \Big( \Big\lbrace \sum_{j=1}^{\beta_n} X_{j}X_{j+n-1}: n=1,\ldots, d \Big\rbrace \Big), \mbox{ where } \beta_n= \max \big\lbrace b: X_bX_{b+n-1} \in I\big\rbrace.\]  
Note that the $i$th generator of $J$ is the sum of the generators of $I$ which correspond to squares lying along the $i$th diagonal in the tableau $T_I$.  

Figure 2 shows the diagonals of the tableau associated to the ideal $I$ from Figure 1.  The diagonal ideal $J$ associated to this ideal $I$ is 
\begin{align}
J=(X_1^2+X_2^2+X_3^2+X_4^2,\ X_1X_2+X_2X_3+&X_3X_4, \notag \\
X_1X_3+X_2X_4+X_3X_5, \ X_1X_4&+X_2X_5+X_3X_6, \notag \\
&X_1X_5+X_2X_6,\ X_1X_6).  \notag
\end{align}

\begin{figure}\label{J}

\begin{center}

\begin{tikzpicture}[scale=.6]

\draw (-1,4)--(5,4);
\draw (-1,3)--(5,3);
\draw (0,2)--(5,2);
\draw (1,1)--(5,1);
\draw (2,0)--(3,0);

\draw[very thick] (-.6,3.6)--(2.6,.4) (.4,3.6)--(2.6,1.4) (1.4,3.6)--(3.6,1.4) (2.4,3.6)--(4.6,1.4) (3.4,3.6)--(4.6,2.4) (4.3,3.7)--(4.7,3.3);

\draw (-1,4)--(-1,3);
\draw (0,4)--(0,2);
\draw (1,4)--(1,1);
\draw (2,4)--(2,0);
\draw (3,4)--(3,0);
\draw (4,4)--(4,1);
\draw (5,4)--(5,1);

\draw (-1,4) node[anchor=south west]{$X_1$};
\draw (0,4) node[anchor=south west]{$X_2$};
\draw (1,4) node[anchor=south west]{$X_3$};
\draw (2,4) node[anchor=south west]{$X_4$};
\draw (3,4) node[anchor=south west]{$X_5$};
\draw (4,4) node[anchor=south west]{$X_6$};

\draw (-1,3) node[anchor=south east]{$X_1$};
\draw (-1,2) node[anchor=south east]{$X_2$};
\draw (-1,1) node[anchor=south east]{$X_3$};
\draw (-1,0) node[anchor=south east]{$X_4$};

\end{tikzpicture}

\end{center}

\caption{The diagonals of the tableau associated to $I$}

\end{figure}
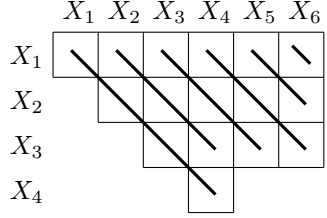

Our main result in this section is as follows:

\begin{prop}\label{Im J} Let $I$ be a strongly stable ideal of degree two having the $G_d$ property.  Let $g=\h I$, and let $\mathfrak{m}=(X_1,\ldots,X_d)$.  Let $J$ be the diagonal ideal associated to $I$.  Then $I\mathfrak{m}^{g-1} \subset J$.

\end{prop}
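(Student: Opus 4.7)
The ideal $I\mathfrak{m}^{g-1}$ is monomial, so it suffices to show that every minimal monomial generator lies in $J$. Write such a generator as $M = X_{a_1}X_{a_2}\cdots X_{a_{g+1}}$ with $a_1 \leq \cdots \leq a_{g+1}$; the membership $M\in I\mathfrak{m}^{g-1}$ is equivalent to $X_{a_1}X_{a_2}\in I$, which by Proposition~\ref{Gd} and the structural form $I = (X_1,\ldots,X_{g-1})\mathfrak{m} + X_g(X_g,\ldots,X_\nu)$ recorded in the proof of Proposition~\ref{SD} amounts to either $a_1\leq g-1$, or $a_1 = g$ with $a_2 \leq \nu$.

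The fundamental reduction step is the identity
\[
N\sigma_n \;=\; M + \sum_{\substack{1\leq b\leq \beta_n \\ b\neq a_i}} N\,X_b X_{b+n-1},
\]
valid for any pair $X_{a_i}X_{a_j}\in I$ occurring as a sub-product of $M$, with $n = a_j - a_i + 1$ and $N = M/(X_{a_i}X_{a_j})$. Each ``other'' monomial $N X_b X_{b+n-1}$ is itself a generator of $I\mathfrak{m}^{g-1}$, so if we can arrange for it to be strictly smaller than $M$ in a well-founded order, induction finishes the proof. The intended base case is $a_1 = 1$ and $a_{g+1} = d$: here $\sigma_d = X_1 X_d$ is a factor of $M$, so $M \in J$ at once. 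My plan is to induct lexicographically on the pair $(d-a_{g+1},\,a_1-1)$, choosing the pair according to which coordinate I want to force down. When $a_1 = 1$ and $a_{g+1} < d$, the pair $(1,a_{g+1})$ is convenient: every ``other'' index $b\geq 2$ yields $b+n-1 > a_{g+1}$, so the primary coordinate strictly decreases. When $a_{g+1}=d$, $a_1\le g-1$, and $X_d$ occurs in $M$ with multiplicity $\geq 2$, the pair $(a_1,d)$ works: one checks from the structure of $I$ that $\beta_n = a_1$ for $n = d-a_1+1$, so every ``other'' $b$ is strictly below $a_1$, dropping the secondary coordinate while keeping $a_{g+1}=d$. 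The intermediate case where $X_d$ appears only once in $M$ and $a_1\geq 2$ can be addressed by using instead a pair $(a_1,a_j)$ with $j\leq g$, so that the lone $X_d$ remains in the multiplier $N$ and $a_{g+1}'=d$ is preserved; the case $a_1 = g$ is handled symmetrically by pairs $X_g X_{a_j}$ with $a_j\leq\nu$.

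The hard part will be the degenerate sub-case $a_1 = a_2$, which forces $n=1$ and the diagonal generator $\sigma_1 = \sum_{j=1}^{g} X_j^2$: then for every $j > a_1$, the ``other'' monomial $NX_j^2$ has first index $j$ strictly greater than $a_1$, so it is not smaller than $M$ in the naive invariant above. To handle this I expect to need either a refined induction (for instance taking as a tertiary coordinate the multiplicity of $X_{a_1}$ in $M$, or refining to a full lexicographic order on the sorted index tuple) or to combine the $\sigma_1$-reduction with reductions coming from other $\sigma_n$'s so that the offending terms with $j > a_1$ cancel against each other modulo $J$. Settling this degenerate case is the main technical hurdle, and the rest of the argument is a case analysis dictated by the form of $I$ recorded above.
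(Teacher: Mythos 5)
There is a genuine gap, and in fact two: one you acknowledge and one you do not. The underlying plan---reduce $M$ modulo $J$ by writing $M$ as a term of $N\sigma_n$ and controlling the other terms via a well-founded order---is the right idea and is the same high-level strategy the paper uses. But the lexicographic invariant $(d-a_{g+1},\,a_1-1)$ does not behave the way your case analysis claims, and fixing this is where all the real work lies.

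First, the case you flag as the ``main technical hurdle'' ($a_1=a_2$, forcing $n=1$) is indeed unresolved, and you offer only speculation about how to handle it. But second, the ``intermediate case'' ($a_{g+1}=d$, $X_d$ appearing with multiplicity one, $2\leq a_1\leq g-1$) is also broken, contrary to your claim. If you reduce along a pair $X_{a_1}X_{a_j}$ with $j\leq g$, the diagonal generator $\sigma_n$ with $n=a_j-a_1+1$ has $\beta_n>a_1$ whenever $a_1\leq g-2$ and $a_j<d$ (since $X_{a_1+1}X_{a_j+1}\in I$ by strong stability). The resulting ``other'' terms $NX_bX_{b+n-1}$ with $a_1<b\leq\beta_n$ keep the lone $X_d$ in $N$, so the last index is still $d$, while the first index is now $\min(a_2,b)>a_1$. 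So the secondary coordinate \emph{increases}. A concrete instance inside the paper's Figure~1 example: $M=X_2X_3X_4X_5X_6$. For any choice of pair $(2,a_j)$ with $a_j\in\{3,4,5\}$, the diagonal $\sigma_n$ has $\beta_n=3$, and the term $b=3$ yields a monomial with first index $3>2$ and last index $6$. So your induction never terminates on this $M$.

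This is exactly why the paper does not induct on a numerical invariant of $M$ directly. Instead it builds a bespoke linear order on a finite set $S=\bigcup_h S_h$ of monomials of degree $h+1\leq g+1$ via Algorithm~\ref{J-alg}, characterizes that order combinatorially (Lemma~\ref{Sh-Th}: each $S_h$ is $T_h$ in reverse lex order; Lemma~\ref{Sh-Shk}: a closed form for the next element of $S_{h-k}$), proves $(X_1,\ldots,X_g)\mathfrak{m}^g\subset(S)$ (Corollary~\ref{in S}), and then proves $I\cap(S)_{g+1}\subset J$ by strong induction on the \emph{position} of a degree-$(h+1)$ factor of $M$ in that order (Proposition~\ref{IS J}). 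The induction variable is thus attached to a subfactor of $M$, not to $M$ itself, and the order is engineered so that the other terms of the diagonal relation are always multiples of strictly earlier elements of $S$. That engineering is precisely what a naive lex pair on $M$'s extreme indices cannot supply.
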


To prove Proposition \ref{Im J}, we will first impose a particular ordering on the elements of $I\mathfrak{m}^{g-1}$.  We then prove that $I \mathfrak{m}^{g-1} \subset J$ by strong induction proceeding according to the ordering.  The following algorithm produces the desired ordering:

\begin{alg}\label{J-alg} Let $S=\{M_i\}$ be the ordered set produced by the following algorithm:

\begin{enumerate}

\item Set $M_1=X_1X_d$, and set $i=1$.

\item Write $M_i=X_{h_i}X_{r_i}X_{t_i} \sigma_i$, where $h_i \leq r_i \leq t_i$ and $X_j | \sigma_i \ \Rightarrow \ j \geq t_i$. 

\begin{itemize}

\vspace{.05in}

\item If $h_i < r_i$ and $h_i < g$, set $M_{i+1}=X_{h_i+1}X_{r_i}^2X_{t_i} \sigma_i$.

\vspace{.05in}

\item Else if $h_i=g <r$, set $M_{i+1}=X_{h_i}X_{r_i-1}X_{t_i}\sigma_i$.

\vspace{.05in}

\item Else if $h_i=r_i$, set $M_{i+1}=X_{h_i-1}X_{t_i-1}\sigma_i$.

\end{itemize}

\vspace{.05in}

\item If $X_1^2 \nmid M_{i+1}$, set $i=i+1$ and return to (2).  Else if $X_1^2 \mid M_{i+1}$, done.

\end{enumerate} 

\end{alg}

We must prove that Algorithm \ref{J-alg} finishes in a finite number of steps.  Furthermore, for a given element $M_i$ in $S$, we must characterize which monomials of $R$ are elements of $S$ which precede $M_i$ in the ordering of the algorithm.  The next two results will accomplish both tasks.  First we shall need some additional notation.

For each $h=1,\ldots, g$, set $S_h:=\{ M_i \in S \mid h=h_i\}$.  That is, $S_h=\{M_i \in S \mid h=\min\{j \mid X_j \mid M_i\} \}$.  It follows easily by induction on $i$ that, for each $M_i \in S_h$, $\deg(M_i)=h+1$.  Note that $M_1=X_1X_d \in S_1$ and $\deg(M_1)=2$.  Now let $i > 1$ and assume the claim holds for $M_{i-1}$.  Write $M_{i-1} \in S_h$.  If $M_{i} \in S_{h+1}$, then $\deg(M_{i})=\deg(M_{i-1}) +1$;  if $M_i \in S_h$, then $\deg(M_i)=\deg(M_{i-1})$; and if $M_{i} \in S_{h-1}$, then $\deg(M_{i})=\deg(M_{i-1})-1$. 

\begin{rem}\label{alg} A trivial observation about Algorithm \ref{J-alg} which we shall invoke is the following:  if $M_i \in S_h$ and $M_{i+p} \in S_{h+k}$ for some $p >0$ and $k \in \mathbb{Z}$, then the set $\{M_i, M_{i+1}, \ldots, M_{i+p}\}$ contains at least one element from each of the sets $S_h, \ldots, S_{h+k}$.
\end{rem}

Write $S_h=\{M_{h,j} \mid j=1,\ldots,\nu_h\}$, where for each $j < \nu_h$, $M_{h,j}$ arises before $M_{h,j+1}$ in the algorithm.  We shall show that, for each $h$, this ordering on $S_h$ is actually the reverse lexicographic ordering.  For each $h=1,\ldots, g$, set $T_h=\big\lbrace N \in R \mid N \mbox{ is a monomial, } h=\min\{\alpha \mid X_\alpha \mid N\}, \deg(N)=h+1\big\rbrace$. 
For each $h$ write $T_h=\{N_{h,m}\}$, where $N_{h,m} < N_{h,m+1}$ in the reverse lexicographic order.  

\begin{lem}\label{Sh-Th} With notation as above, for each $h=1,\ldots,g$, $S_h$ and $T_h$ are equal as ordered sets.   

\end{lem}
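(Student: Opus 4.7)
The plan is to argue by strong induction on the step index $i$ of Algorithm \ref{J-alg}, carrying the following compound invariant at each step: for every $h \in \{1, \ldots, g\}$, the elements of $S_h \cap \{M_1, \ldots, M_i\}$, listed in the order they are produced, form an initial segment of $T_h$ under the reverse lexicographic ordering; and moreover, $M_i$ is the final entry of its own $S_{h_i}$ segment so far. The base case $i = 1$ is immediate: $M_1 = X_1 X_d$ is by inspection the reverse-lex smallest element of $T_1 = \{X_1 X_j : 1 \le j \le d\}$.

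For the inductive step, I would split into cases according to which of the three rules of the algorithm produces $M_{i+1}$. Rule (b) is the easiest: the transformation merely replaces one copy of $X_{r_i}$ in $M_i$ by a copy of $X_{r_i - 1}$, and a routine comparison of exponent vectors shows that no element of $T_g$ lies strictly between $M_i$ and $M_{i+1}$ in reverse-lex order, so $M_{i+1}$ is the immediate successor of $M_i$ in $T_g$. Rule (a) moves $M_{i+1}$ into $S_{h_i + 1}$, and rule (c) moves it into $S_{h_i - 1}$; in each of those two cases the verification reduces to identifying the most recently added element of $S_{h_i + 1}$ (respectively $S_{h_i - 1}$) prior to step $i+1$ and confirming that $M_{i+1}$ is its reverse-lex successor in $T_{h_i + 1}$ (respectively $T_{h_i - 1}$).

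The hard part will be the analysis of rules (a) and (c), because the algorithm zigzags through many other $S_{h'}$'s between consecutive visits to $S_{h_i \pm 1}$, and one must show that in the interim nothing is skipped and nothing is repeated. To address this, I plan to introduce an auxiliary invariant tracking, for each $h$, the exact form of the most recently produced monomial in $S_h$ as a function of the algorithm's history; the key combinatorial fact to verify is that the reverse-lex successor in $T_{h_i + 1}$ of the previous last element of $S_{h_i + 1}$ is obtained from the current monomial $M_i \in S_{h_i}$ by precisely the substitution $X_{h_i} \mapsto X_{h_i + 1} X_{r_i}$ encoded in rule (a), and analogously for the transition $X_{h_i}^2 X_{t_i} \mapsto X_{h_i - 1} X_{t_i - 1}$ encoded in rule (c). Remark \ref{alg}, which records that the algorithm cannot skip levels $S_h$ on its way between two indices, should be the main combinatorial tool in pinning this down.

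Finally, I would address termination and completeness. The invariant forces the $M_i$ to be pairwise distinct, so finiteness of $\bigsqcup_h T_h$ yields termination, provided one of the three rules always applies until the halting condition $X_1^2 \mid M_{i+1}$ is reached; this is a short case check on the relations between $h_i$, $r_i$, and $g$. To see that the algorithm actually exhausts every $T_h$ before terminating, I would argue that reaching $N_{1, \nu_1} = X_1^2$ forces the algorithm first to produce every earlier element of $T_1$, which via the rule (c) transitions forces the full reverse-lex chain of $T_2$, and, iterating, the full chain of every $T_h$ up through $T_g$.
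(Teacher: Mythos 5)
Your overall plan — a single strong induction on the step index $i$, maintaining the invariant that each $S_h \cap \{M_1,\dots,M_i\}$ is an initial segment of $T_h$ in reverse-lex order — is a plausible reorganization, and it differs from the paper's proof, which instead fixes $h$ and $j$, classifies $M_{h,j}$ by the exponent $n_h$ of $X_h$ (Cases 1, 2, 3), and inducts on $g-h$ in Case~1 and on $h$ in Case~2 in order to trace the subsequent path of the algorithm until it returns to level $h$.

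The difficulty is that your proposal leaves the essential content unproven. You correctly identify that rules (a) and (c) are the hard cases because the algorithm zigzags among the levels, and you say you would ``introduce an auxiliary invariant tracking, for each $h$, the exact form of the most recently produced monomial in $S_h$.'' But you never state what that invariant is, and without it the ``key combinatorial fact'' you announce for rule (a) — that $M_{i+1}$ obtained by the substitution $X_{h_i}\mapsto X_{h_i+1}X_{r_i}$ is exactly the revlex successor in $T_{h_i+1}$ of the last element of $S_{h_i+1}$ produced so far — is not something you have reduced to a checkable computation. That claim is essentially equivalent to the lemma itself, and verifying it is precisely where all the work lies: one must know, as a function of the current monomial $M_i$, what the last element of $S_{h_i\pm 1}$ was, taking into account that the algorithm may have applied rule (a) or rule (c) several times consecutively in the interim. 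Remark \ref{alg} alone only tells you that no levels are skipped; it does not by itself identify which monomial of $T_{h_i\pm1}$ was last visited. The paper resolves this by running induction in $h$, so that the revlex structure of $S_{h+1}$ (Case~1) or $S_{h-1}$ (Case~2) is available, and then explicitly writing out the chain $M_{i+1}, M_{i+2},\dots$ up to the return to $S_h$. Your proposal, as written, names the obstacle but does not overcome it; the ``auxiliary invariant'' would need to be made precise and shown to be maintained by each of the three rules before this could be called a proof. The same vagueness affects your closing paragraph on completeness: asserting that reaching $X_1^2$ ``forces'' the full chain of every $T_h$ to have been produced is true, but it is a consequence of the lemma, not an independent observation, so it cannot be used to help establish it.
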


\begin{proof}

Clearly $N_{h,1}=X_hX_d^h$ for each $h=1,\ldots, g$.  One can check that also $M_{h,1}=X_hX_d^h$ for each $h$.  (This is easily verified, as $M_{1,1},\ldots,M_{g,1}$ are the first $g$ elements produced by the algorithm.)  Also, clearly $S_h \subseteq T_h$ for each $h$;  hence for each $(h,j)$, there is some $m$ such that $M_{h,j}=N_{h,m}$.  Fix $h$, and fix $j < \nu_h$.  We will prove that $N_{h,m} \neq X_h^{h+1}$---that is, that $N_{h,m}$ is not the greatest element of $T_h$ in the reverse lexicographic ordering---and that $M_{h,j+1}=N_{h,m+1}$.  This will complete the proof.  Write $M_{h,j}=N_{h,m}=X_h^{n_h}X_{w_1}^{n_{w_1}} \ldots X_{w_s}^{n_{w_s}} \mbox{, where } h < w_1 < \ldots < w_s$.  
If $N_{h,m} \neq X_h^{h+1}$, then $N_{h,m+1}= X_hX_{w_1-1}^{n_{h}}X_{w_1}^{n_{w_1}-1}X_{w_2}^{n_{w_2}} \ldots X_{w_s}^{n_{w_s}}$.  The proof proceeds in three cases.

\emph{Case 1:} $n_h=1$.  We induct on $g-h$, and we shall need to treat two base cases separately.  If $h=g$, then the instructions of the algorithm stipulate that $M_{i+1}=X_hX_{w_1-1}X_{w_1}^{n_{w_1}-1}X_{w_2}^{n_{w_2}} \ldots X_{w_s}^{n_{w_s}}$.  Clearly this is the element of $S_h$ immediately following $M_{h,j}$;  that is, $M_{i+1}=M_{h,j+1}$.  It is also the element of $T_h$ which immediately follows $T_{h,m}$ in the reverse lexicographic ordering;  thus $M_{h,j+1}=T_{h,m+1}$ as claimed.  If $g-h=1$, then $M_{i+1}=X_gX_{w_1}^{n_{w_1}+1}X_{w_2}^{n_{w_2}} \ldots X_{w_s}^{n_{w_s}}$.  If $w_1=g$, then $M_{i+2}= X_{g-1}^2X_{w_1}^{n_{w_1-1}}X_{w_2}^{n_{w_2}} \ldots X_{w_s}^{n_{w_s}}$.  This is $M_{h,j+1}$, which is $N_{h,m+1}$ as claimed.
Otherwise, if $w_1 > g$, then $M_{i+2} =X_gX_{w_1-1}X_{w_1}^{n_{w_1}} \ldots X_{w_s}^{n_{w_s}}$, and $M_{i+3} =X_{g-1}X_{w_1-1}X_{w_1}^{n_{w_1}-1}X_{w_2}^{n_{w_2}}\ldots X_{w_s}^{n_{w_s}}$.  In this case $M_{i+3}= M_{h,j+1}$, which is $N_{h,m+1}$ as claimed.  

Now let $g-h >1$, and assume the claim holds for $h+1$.  That is, for all $j'< \nu_{h+1}$ such that $X_{h+1}^2 \nmid M_{h+1,j'}$, assume that, if $M_{h+1,j'}=N_{h+1,m'}$, then $M_{h+1,j'+1}=N_{h+1,m'+1}$.  If $w_1=h+1$, then $M_{i+1}=X_{h+1}^{h+2}X_{w_2}^{n_{w_2}}\ldots X_s^{n_s}$.  In this case, $M_{i+2}=X_h^2X_{w_1}^{n_{w_1}-1}X_{w_2}^{n_{w_2}} \ldots X_s^{n_s}=N_{h,m+1}$, and this is $M_{h,j+1}$ as claimed.  Otherwise, if $w_1 > h+1$, then $M_{i+1}=X_{h+1}X_{w_1}^{n_{w_1}+1}X_{w_2}^{n_{w_2}} \ldots X_s^{n_s}$.  In this case, $M_{i+2}$ will be in $S_{h+2}$.  Therefore, by Remark \ref{alg}, there must be at least one element of $S_{h+1}$ which falls between $M_{i+1}$ and $M_{h,j+1}$ in the ordering of the algorithm.  That is, writing $M_{i+1}=M_{h+1,j'}=N_{h+1,m'}$, we have $j' < \nu_{h+1}$.  Therefore, by the induction assumption, $M_{h+1,j'+1}=N_{h+1,m'+1}$, where $N_{h+1,m'+1}=X_{h+1}X_{w_1-1}X_{w_1}^{n_{w_1}} \ldots X_{w_s}^{n_{w_s}}$.  By the same arguments, for every $ p=1,\ldots, w_1-(h+1)$, $M_{h+1,j'+p}=N_{h+1, m'+p}$, where $N_{h+1,m'+p}=X_{h+1}X_{w_1-p}X_{w_1}^{n_{w_1}} \ldots X_{w_s}^{n_{w_s}}$.  With this explicit description of the elements $M_{h+1,j'+1},\ldots, M_{h+1,j'+w_1-(h+1)}$, we see that, by the instructions of the algorithm, $M_{h+1,j'+w_1-(h+1)}=X_{h+1}^2X_{w_1}^{n_{w_1}}\ldots X_{w_s}^{n_{w_s}}$ precedes $M_{h,j+1}$ in the ordering of $S$.  Write $M_{h+1,j'+w_1-(h+1)}=M_{i'}$.  Now according to the algorithm, $M_{i'+1}=X_hX_{w_1-1}X_t^{n_{w_1}-1}X_{w_2}^{n_{w_2}} \ldots X_{w_s}^{n_{w_s}}$, and this is perforce $M_{h,j+1}$.  As this element is also $N_{h,m+1}$, the claim is shown.

\emph{Case 2:}  $2 \leq n_h \leq h$.  We induct on $h$.  For $h=1$ there is nothing to show, since there is no element of $S_1$ with $2 \leq n_h \leq h$.  Let $h >1$, and assume that, for all $j'< \nu_{h-1}$ such that $X_{h-1}^2 \mid M_{h-1,j'}$ and $M_{h-1,j'} \neq X_{h-1}^h$, if $M_{h-1,j'}=N_{h-1,m'}$, then $M_{h-1,j'+1}=N_{h-1,m'+1}$.  If $n_h=2$, then $M_{i+1}= X_{h-1}X_{w_1-1}X_{w_1}^{n_{w_1}-1}X_{w_2}^{n_{w_2}}\ldots X_{w_s}^{n_{w_s}}$, and $M_{i+2}= X_hX_{w_1-1}^2X_{w_1}^{n_{w_1}-1}X_{w_2}^{n_{w_2}}\ldots X_{w_s}^{n_{w_s}}=N_{h, m+1}$
 and this is $M_{h,j+1}$ as claimed. 

If $n_h \geq 3$, then $M_{i+1}=X_{h-1}^2X_h^{n_h-3}X_{w_1}^{n_{w_1}}\ldots X_{w_s}^{n_{w_s}}$.  Write $M_{i+1}=M_{h-1,j'}=N_{h-1,m'}$.  In this case $M_{i+2} \in S_{h-2}$.  As above, this implies that $j' < \nu_{h-1}$ by Remark \ref{alg};  indeed, $j'+p < \nu_{h-1}$ while $X_{h-1}^2 \mid M_{h-1,j'+p}$.  Hence the induction assumption shows that $M_{h-1,j'+p}=N_{h-1,m'+p}=X_{h-1}^{p+2}X_h^{n_h-3-p}X_{w_1}^{n_{w_1}}\ldots X_{w_s}^{n_{w_s}}$
 for $p=1,\ldots,n_h-3$.  Note that, since we assume that $n_h \leq h$, $M_{h-1,j'+n_h-3}=X_{h-1}^{n_h-1}X_{w_1}^{n_{w_1}} \ldots X_{w_s}^{n_{w_s}} \neq X_{h-1}^h$.  Hence by the induction assumption we also have $M_{h-1,j'+n_h-2}=N_{h-1,m'+n_h-2}=X_{h-1}X_{w_1-1}^{n_h-1}X_{w_1}^{n_{w_1}-1}X_{w_2}^{n_{w_2}} \ldots X_{w_s}^{n_{w_s}}$.  As above, this shows that $M_{h-1,j'+n_h-2}$ precedes $M_{h,j+1}$ in the ordering of $S$.  Write \\ $M_{h-1,j'+n_h-2}=M_{i'}$.  Then $M_{i'+1}=X_hX_{w_1-1}^{n_h}X_{w_1}^{n_{w_1}-1}X_{w_2}^{n_{w_2}}\ldots X_{w_s}^{n_{w_s}}=N_{h,m+1}$, and this is $M_{h,j+1}$ as claimed. 

\emph{Case 3:} $n_h=h+1$.  This case we show to be impossible.  Suppose not, and let $h$ be the smallest number for which $M_{h,j}=X_h^{h+1}=M_i$ for some $j < \nu_h$.  Note that $h=1$ is impossible, since the algorithm explicitly states that $X_1^2=M_{1,\nu_1}$.  Similarly if $M_i=X_2^3$, then $M_{i+1}=X_1^2$ and the algorithm finishes, so that $h=2$ is impossible.  Thus we have $M_{i+1}=X_{h-1}^2X_h^{h-2}$.  Write $M_{i+1}=M_{h-1,j'}=N_{h-1,m'}$.  As in Case 2, one sees that $M_{h-1,j'+h-2}=X_{h-1}^h$ precedes $M_{h,j}$, and this contradicts the minimality of $h$.
\end{proof}

\begin{cor} Algorithm \ref{J-alg} finishes in a finite number of steps.

\end{cor}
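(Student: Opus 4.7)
The plan is to derive the corollary almost entirely from the content of Lemma \ref{Sh-Th} by a finiteness/pigeonhole argument. Since the algorithm is deterministic and only ever terminates via the condition $X_1^2 \mid M_{i+1}$ in step (3), it suffices to show that the sequence $M_1, M_2, \ldots$ it produces cannot be infinite.

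First I would record that the sets $S_1, \ldots, S_g$ are pairwise disjoint. This is immediate from their definition: for each $M_i \in S$ the index $h = \min\{\alpha : X_\alpha \mid M_i\}$ is intrinsic to the monomial $M_i$, so no monomial can lie in two different $S_h$. Thus $S = \bigsqcup_{h=1}^g S_h$ as a set.

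Next I would invoke Lemma \ref{Sh-Th}, which identifies the ordered set $S_h$ with (an initial segment of) the ordered set $T_h$ under the reverse lexicographic order. In particular, successive elements $M_{h,j}$ and $M_{h,j+1}$ of $S_h$ are strictly different monomials, so the elements within each $S_h$ are pairwise distinct. Combined with disjointness across different $h$, this shows that the monomials $M_1, M_2, \ldots$ produced by the algorithm are pairwise distinct.

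Finally, each $T_h$ consists of monomials in $R=k[X_1,\ldots,X_d]$ of degree exactly $h+1$, and there are only finitely many such monomials; hence $|S_h| \leq |T_h| < \infty$, and so
\[
|S| \;\leq\; \sum_{h=1}^g |T_h| \;<\; \infty.
\]
If the algorithm never halted, it would produce infinitely many pairwise distinct monomials in $S$, contradicting this bound. Therefore the algorithm terminates after finitely many steps. The one subtlety to keep honest is that the ``ordered sets'' conclusion of Lemma \ref{Sh-Th} is used only insofar as it delivers strict inequality (i.e.\ distinctness) of consecutive elements within each $S_h$; this is the hardest ingredient, but it is already established in the previous lemma, so no genuine obstacle remains here.
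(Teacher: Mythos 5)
Your proof is correct and follows essentially the same route as the paper: both use Lemma \ref{Sh-Th} to conclude that the algorithm never repeats a monomial, and then bound $\lvert S \rvert$ by the (finite) number of monomials of degree at most $g+1$. The only cosmetic difference is that the paper invokes the full equality $S_h = T_h$ while you only use the inclusion $S_h \subseteq T_h$ together with distinctness, which is a slightly leaner use of the lemma but not a genuinely different argument.
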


\begin{proof} The fact that $S_h$ and $T_h$ are equal as ordered sets for all $h=1,\ldots,g$ shows that no element of $S$ is selected by the algorithm more than once.  Therefore, the number of steps in the algorithm is equal to the size of $S$, where $\lvert S \rvert= \lvert S_1 \cup \ldots \cup S_g \rvert = \lvert T_1 \cup \ldots \cup T_g \rvert$, and this is smaller than the number of monomials in $R$ of degree at most $g+1$.
\end{proof}

Lemma \ref{Sh-Th} also yields the following immediate observation:

\begin{cor}\label{in S}  With notation as above, $(X_1,\ldots,X_g) \mathfrak{m}^{g} \subset (S)$.

\end{cor}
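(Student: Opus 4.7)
The plan is to use Lemma \ref{Sh-Th} to reinterpret $S$ combinatorially, and then to check containment on the monomial generators of $(X_1,\ldots,X_g)\mathfrak{m}^g$ by exhibiting, for each generator $M$, an explicit divisor of $M$ that lies in $S$.

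By Lemma \ref{Sh-Th}, $S_h = T_h$ as sets for each $h = 1, \ldots, g$, so
\[
S \;=\; \bigcup_{h=1}^{g} T_h \;=\; \big\lbrace\, N \in R : N \text{ is a monomial}, \ \deg N = h+1, \ \min\{\alpha : X_\alpha \mid N\} = h \text{ for some } 1 \le h \le g \,\big\rbrace.
\]
Because both $(X_1,\ldots,X_g)\mathfrak{m}^g$ and $(S)$ are monomial ideals, it suffices to verify that every monomial generator of $(X_1,\ldots,X_g)\mathfrak{m}^g$ is divisible by some element of $S$.

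A typical monomial generator has the form $M = X_i \cdot m$ with $1 \le i \le g$ and $m$ a monomial of degree $g$; in particular $\deg M = g+1$ and at least one variable of index $\le g$ divides $M$. Set $h = \min\{\alpha : X_\alpha \mid M\}$, so that $h \le i \le g$. I would then construct a divisor $N' \mid M$ by taking the factor $X_h$ together with any $h$ additional variable factors of $M$ (counted with multiplicity). This is possible since $\deg M = g+1 \ge h+1$. By construction, $\deg N' = h+1$, every variable appearing in $N'$ appears in $M$ and therefore has index $\ge h$, and $X_h$ is one of those factors; hence $\min\{\alpha : X_\alpha \mid N'\} = h$, which places $N'$ in $T_h = S_h \subset S$. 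Since $N' \mid M$, this gives $M \in (N') \subset (S)$, completing the argument.

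There is no real obstacle here: the content of the corollary has already been absorbed into Lemma \ref{Sh-Th}, which identifies the ordered set $S_h$ produced by Algorithm \ref{J-alg} with the purely combinatorial set $T_h$. The only thing to be careful about is the bookkeeping that the chosen subproduct $N'$ truly has minimum-index exactly $h$ (not smaller), which is automatic from the choice of $h$ as the minimum of the indices occurring in $M$.
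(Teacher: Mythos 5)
Your proof is correct and follows essentially the same approach as the paper: identify $S$ with $T_1 \cup \cdots \cup T_g$ via Lemma \ref{Sh-Th}, and then for each monomial generator $M$ of $(X_1,\ldots,X_g)\mathfrak{m}^g$ exhibit a degree-$(h+1)$ divisor lying in $T_h$, where $h$ is the smallest index of a variable dividing $M$. You spell out the bookkeeping (choosing $h$ as the minimum index and extracting $h$ further factors) a bit more explicitly than the paper's one-line observation, but the underlying argument is identical.
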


\begin{proof} We have $S=T_1 \cup \ldots \cup T_g$.  Now observe that every monomial in $X_h \mathfrak{m}^g$ is a multiple of some element of $T_h$.
\end{proof}

The next result concerns elements of different degree, and completes the description of the ordering on $S$.

\begin{lem}\label{Sh-Shk} Let $S_h$ be as above, let $M_i \in S_h$, and write $M_i=X_hX_{v_1}\ldots X_{v_h}$, where $v_1 \leq \ldots \leq v_h$.  Then for each $k \geq 1$,  the element of $S_{h-k}$ immediately following $M_i$ in the ordering of $S$ is $ X_{h-k}X_{v_{k+1}-1} X_{v_{k+2}} \ldots X_{v_h}$.

\end{lem}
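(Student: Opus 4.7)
My plan is to prove the lemma by strong reverse induction on the position $i$ of $M_i$ in the ordered set $S$. The base case is $M_i = X_1^2$, the final element produced by Algorithm \ref{J-alg}, where $h_i = 1$ and the conclusion is vacuous since no $k \geq 1$ satisfies $h_i - k \geq 1$.

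For the inductive step, let $M_i = X_h X_{v_1} \cdots X_{v_h} \in S_h$ with $h \geq 2$, and assume the statement holds for $M_{i+1}, M_{i+2}, \ldots$. The strategy is to apply Algorithm \ref{J-alg} to identify $M_{i+1}$ together with its own sorted $v'$-sequence, then reduce to the inductive hypothesis. There are three cases, according to which rule of the algorithm is invoked.

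\emph{Case A} ($v_1 = h$): The third rule gives $M_{i+1} = X_{h-1} X_{v_2-1} X_{v_3} \cdots X_{v_h} \in S_{h-1}$. For $k = 1$, this $M_{i+1}$ is itself the next element of $S_{h-1}$ after $M_i$, and matches the claimed formula directly. For $k \geq 2$, since $M_{i+1} \in S_{h-1} \neq S_{h-k}$, the next element of $S_{h-k}$ after $M_i$ coincides with the next element of $S_{h-k}$ after $M_{i+1}$; applying the inductive hypothesis to $M_{i+1}$, whose $v'$-sequence is $(v_2 - 1, v_3, \ldots, v_h)$, with parameters $h' = h - 1$ and $k' = k - 1$ yields the desired $X_{h-k} X_{v_{k+1}-1} X_{v_{k+2}} \cdots X_{v_h}$.

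\emph{Case B1} ($v_1 > h$, $h < g$): The first rule gives $M_{i+1} = X_{h+1} X_{v_1}^2 X_{v_2} \cdots X_{v_h} \in S_{h+1}$, with $v'$-sequence $(v_1, v_1, v_2, \ldots, v_h)$. The next element of $S_{h-k}$ after $M_i$ equals that after $M_{i+1}$; the inductive hypothesis with $h' = h+1$ and $k' = k+1$ then gives the formula, since the index shift in $v'$ exactly absorbs the shift in $k$.

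\emph{Case B2} ($v_1 > h$, $h = g$): The second rule gives $M_{i+1} = X_h X_{v_1-1} X_{v_2} \cdots X_{v_h} \in S_h$, whose $v'$-sequence is $(v_1 - 1, v_2, \ldots, v_h)$; crucially, $v'_j = v_j$ for every $j \geq 2$. The inductive hypothesis with $h' = h$ and $k' = k$ then produces the same formula, since the conclusion depends only on $v_{k+1}, \ldots, v_h$.

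In each case there is some routine bookkeeping to check: that the $v'$-sequence remains weakly increasing (using $v_1 \leq v_2$ in Cases A and B2, and automatic in B1), that $v_1 - 1 \geq h$ in B2 so $M_{i+1}$ lies in $S_h$, and that the inductive hypothesis is applicable, i.e., $h' - k' \geq 1$. The main obstacle will be this bookkeeping---tracking how the $v$-sequence transforms under each algorithm step and verifying that the three transformations conspire to produce exactly $X_{h-k} X_{v_{k+1}-1} X_{v_{k+2}} \cdots X_{v_h}$ in every scenario, while keeping the variable indices properly sorted.
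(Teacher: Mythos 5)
Your proof is correct, and it is a genuinely different argument from the paper's. The paper inducts on $k$: for $k=1$ it locates the element $X_h^2 X_{v_2}\cdots X_{v_h}$ of $S_h$, uses the revlex characterization of Lemma \ref{Sh-Th} to argue that no element of $S_{h-1}$ can be produced between $M_i$ and this element, and then applies the third algorithm rule once; for $k>1$ it reduces to $k-1$ by a similar argument. Your proof instead performs a reverse induction on the position $i$, applying a single algorithm step to pass from $M_i$ to $M_{i+1}$ and then invoking the inductive hypothesis with a parameter $k'$ chosen to compensate ($k'=k-1$, $k+1$, or $k$ according to which rule fired). This avoids any appeal to Lemma \ref{Sh-Th} in the proof of \ref{Sh-Shk} itself, and the bookkeeping you flag does check out: in each case the new $v'$-sequence stays weakly increasing (via $v_1 \leq v_2$), $M_{i+1}$ lands in the correct level $S_{h'}$ (using $v_1 > h$ or $v_1 - 1 \geq g$), and the offset $h'-k' = h-k \geq 1$ holds whenever the original claim is nonvacuous ($1 \leq k \leq h-1$), so the inductive hypothesis always applies and the index shift $v'_{k'+1} = v_{k+1}$ comes out right in every branch. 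The one small omission is that your reduction tacitly uses that ``the element of $S_{h-k}$ immediately following $M_{i+1}$'' exists; this follows because the algorithm terminates at $X_1^2 \in S_1$, so by Remark \ref{alg} some element of $S_{h-k}$ must appear after any element of $S_{h'}$ with $h' > h-k \geq 1$, and in any case your inductive construction produces it explicitly. On balance the two proofs trade a smaller induction variable and some revlex bookkeeping (paper) for following the algorithm literally one step at a time (yours); both are sound.
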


\begin{proof}  We induct on $k$.  For $k=1$, write $M_{i'}=X_h^2X_{v_2} \ldots X_{v_h}$, and observe that $M_i$ precedes $X_h^2X_{v_2} \ldots X_{v_h}$ in the ordering of $S$, or $M_i=M_{i'}$.  Furthermore, by the instructions of Algorithm \ref{J-alg}, there is no element of $S_{h-1}$ between $M_i$ and $M_{i'}$ in the ordering of $S$.  Therefore $M_{i'+1}= X_{h-1}X_{v_2-1}X_{v_3} \ldots X_{v_h}$ is the element of $S_{h-1}$ immediately following $M_i$.  Now let $k >1$ and assume the claim holds for $k-1$.  By the induction assumption, $M_{i'}= X_{h-k+1}X_{v_{k}-1} X_{v_{k+1}} \ldots X_{v_h}$ is the element of $S_{h-(k-1)}$ immediately following $M_i$ in the ordering of $S$.  Therefore, by Remark \ref{alg}, there is no element of $S_{h-k}$ which falls between $M_i$ and $M_{i'}$ in the ordering of $S$.  Now write $M_{i''}=X_{h-k+1}^2X_{v_{k+1}} \ldots X_{v_h}$.  As before, either $M_{i'}$ precedes $M_{i''}$ in the ordering of $S$ or $M_{i'}=M_{i''}$, and there is no element of $S_{h-k}$ between $M_{i'}$ and $M_{i''}$.  Therefore $M_{i''+1}=X_{h-k}X_{v_{k+1}-1}X_{v_{k+2}} \ldots X_{v_h}$ is the element of $S_{h-k}$ immediately following $M_{i'}$, and hence $M_i$.
\end{proof}

We may now proceed with our induction.  Proposition \ref{Im J} is an immediate consequence of the following result.

\begin{prop}\label{IS J} With notation as above, $I \cap (S)_{g+1} \ \subset \ J$.

\end{prop}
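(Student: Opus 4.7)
The plan is to prove Proposition~\ref{IS J} by strong induction on the position $i$ of $M_i$ in the ordering of $S$ produced by Algorithm~\ref{J-alg}. Throughout, write $\gamma_n := \sum_{j=1}^{\beta_n} X_j X_{j+n-1}$ for the $n$-th generator of $J$. The inductive claim at index $i$ reads: for every monomial $N$ with $\deg(M_i \cdot N) = g+1$ and $M_i \cdot N \in I$, we have $M_i \cdot N \in J$. This suffices because every monomial in $I \cap (S)_{g+1}$ is divisible by some element of $S$, hence equals $M_i \cdot N$ for some $i$ and some monomial $N$ of degree $g+1 - \deg(M_i)$.

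The base case $i = 1$ is immediate: since no pair $X_j X_{j+d-1}$ with $j \geq 2$ exists in $R$, we have $\gamma_d = X_1 X_d = M_1$, and so $M_1 \cdot N = \gamma_d \cdot N \in J$ for every monomial $N$.

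For the inductive step, I would identify which case of Algorithm~\ref{J-alg} produces $M_i$ from $M_{i-1}$, then choose a generator $\gamma_n$ and a monomial $\tau$ so that
\[
\gamma_n \cdot \tau \;=\; M_i \cdot N \;+\; \sum_k M_{j_k} \cdot N'_k,
\]
with each $j_k < i$ and each $M_{j_k} \cdot N'_k \in I$; the inductive hypothesis then gives $M_{j_k} \cdot N'_k \in J$ for each $k$, whence $M_i \cdot N \in J$. For instance, if Case~1 of the algorithm applies, so that $M_i = X_{h+1} X_r^2 X_t \sigma_{i-1}$ arises from $M_{i-1} = X_h X_r X_t \sigma_{i-1}$, I would take $n = r - h$ (the diagonal containing $X_{h+1} X_r$) and $\tau = X_r X_t \sigma_{i-1} \cdot N$. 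The expansion
\[
\gamma_{r-h} \cdot \tau \;=\; \sum_{a=1}^{\beta_{r-h}} X_a X_{a+r-h-1} \cdot X_r X_t \sigma_{i-1} \cdot N
\]
produces $M_i \cdot N$ as the $a = h+1$ term, $M_{i-1} \cdot (X_{r-1} N)$ as the $a = h$ term, and for each remaining $a$ an application of Lemma~\ref{Sh-Shk} identifies an element of $S_a$ preceding $M_i$ that divides the $a$-th term. The analogous strategy handles Cases~2 and~3, with different choices of the diagonal $n$.

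The main obstacle will be the combinatorial bookkeeping required for the three cases: ensuring that in each one, every extra monomial appearing in $\gamma_n \cdot \tau$ decomposes as $M_j \cdot N'$ with both $j < i$ and $M_j \cdot N' \in I$. The former uses Lemmas~\ref{Sh-Th} and~\ref{Sh-Shk} to locate the appropriate $M_j \in S_a$ preceding $M_i$, while the latter invokes the strong stability of $I$ to propagate $I$-membership from $M_i \cdot N$ to the related monomials appearing in the expansion.
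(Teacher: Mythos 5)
Your proposed strategy coincides with the paper's own proof: strong induction on $i$, writing $M_i\omega$ as the product of a diagonal generator of $J$ (the one containing the two smallest-index variables in $M_i$) with a suitable monomial, and then handling the remaining terms of the expansion by identifying each as a multiple of an earlier $M_j$ (lying in $I$) via Lemmas~\ref{Sh-Th} and~\ref{Sh-Shk}. The combinatorial bookkeeping you flag as the main obstacle is indeed the bulk of the paper's proof, organized there by three sub-cases on the sign of a minimum $n$ of certain index gaps rather than by which instruction of Algorithm~\ref{J-alg} produced $M_i$---since both prescriptions pick the same generator for each $M_i$, the underlying computations agree.
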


\begin{proof} 

We must show that every degree $g+1$ multiple of an element $M_i \in S$ which is in $I$ is also in $J$.  We induct on $i$.  Since $M_1=X_1X_d \in J$, obviously every multiple of $M_1$ is in $J$.  Now let $i >1$, and assume that, for every $i' < i$, $M_{i'} \omega' \in J$ for every monomial $\omega'$ with $M_{i'} \omega' \in (S)_{g+1} \cap I$.  We will show that $M_i \omega \in J$ for every monomial $\omega$ with $M_i \omega \in (S)_{g+1} \cap I$, and this will complete the proof.  

Let $M_i \in S_h$, and write $M_i=X_hX_{v_1}\ldots X_{v_h}$, where $h \leq v_1 \leq \ldots \leq v_h$.  If $h \leq g-1$, then $X_hX_{v_1} \in I$ since $X_h \mathfrak{m} \subset I$ for all $h=1,\ldots, g-1$.  On the other hand, if $h=g$, then $\deg(M_i)=g+1$, so our assumption is that $M_i$ itself is in $I$.  In this case, since $I$ is strongly stable of degree two, we still obtain $X_hX_{v_1} \in I$.  Thus $X_hX_{v_1}$ is a term of a unique element $f \in J$, given by $f=\sum_{k=-h+1}^\beta X_{h+k}X_{v_1+k}, \mbox{ where } \beta=\max\{b: X_{h+b}X_{v_1+b} \in I \} \geq 0$.  Fix $\omega$, and write $\omega= X_{w_1} \ldots X_{w_{g-h}}$, where $w_\alpha \leq w_{\alpha+1}$ for all $\alpha$.  We will show that $X_{h+k}X_{v_1+k}X_{v_2}\ldots X_{v_h} \omega \in J$ for every $k=-h+1,\ldots, \beta$ with $k \neq 0$, and hence that $M_i \omega =(X_{\nu_2} \ldots X_{\nu_h}\omega) f - \sum_{k \neq 0}X_{h+k}X_{v_1+k}X_{v_2}\ldots X_{v_h} \omega  \in J$.

Note that $X_{h+k}X_{v_1+k} \in I$ for each $k= -h+1,\ldots, \beta$.  Therefore, it suffices to show, for each $k \neq 0$, that $X_{h+k}X_{v_1+k}X_{v_2}\ldots X_{v_h} \omega$ is a multiple of an element of $S$ which precedes $M_i$ in the order of the algorithm.  Fix $k \neq 0$.  Write $v_2=h+u \geq h$.  If $h < g$, write $w_1=h+t$ and set $n= \min\{t,k,u\}$.  If $h=g$, then set $n=\min\{k,u\}$.  Suppose first that $n \leq -1$, and write $m=-n$.  By Lemma \ref{Sh-Shk}, the element of $S_{h-m}$ immediately following $M_i$ in the ordering of $S$ is $\Phi_m:=X_{h-m}X_{v_{m+1}-1} X_{v_{m+2}} \ldots X_{v_h}$.  Consider the element $\Upsilon_m:=X_{h-m}X_{v_{m+1}} X_{v_{m+2}} \ldots X_{v_h}$.  Note that $\Upsilon_m \in T_{h-m}$, and $\Upsilon_m$ is smaller than $\Phi_m$, in the reverse lexicographic ordering.  Hence by Lemma \ref{Sh-Th}, $\Upsilon_m$ is an element of $S_{h-m}$ which precedes $\Phi_m$, and hence $M_i$ in the algorithm.  Note that $\Upsilon_m \mid X_{h+k}X_{v_1+k}X_{v_2} \ldots X_{v_h}\omega$,
 since either $X_{h-m}=X_{h+k}$ or $X_{h-m}=X_{w_1}$, and since we have assumed that $m+1 \geq 2$.  Therefore $X_{h+k}X_{v_1+k}X_{v_2} \ldots X_{v_h}\omega \in J$ by the induction assumption.  Suppose next that $n=0$, so that $n=t$ or $n=u$. If $n=t$, so that $w_1=h$, then the element $\Upsilon_0:=X_hX_{v_1+k}X_{v_2}\ldots X_{v_h} \in S$ divides $X_{h+k}X_{v_1+k}X_{v_2}\ldots X_{v_h} \omega$.  Furthermore, $\Upsilon_0$ is smaller than $M_i$ in the reverse lexicographic ordering since $k \geq 1$.  If $n=u < t$, so that $w_1 > v_2$, then we may take $\Upsilon_0:=X_hX_{v_1+k}X_{w_1}X_{v_3}\ldots X_h$.  

Finally, suppose that $n \geq 1$.  Set $\Upsilon_n:=X_{h+k}X_{v_1+k}X_{v_2}\ldots X_{v_h} X_{w_1} \ldots X_{w_n}$.  We will show that $\Upsilon_n$ precedes $M_i$ in the order of the algorithm, and hence that $\Upsilon_n w_{n+1} \ldots w_{g-h} \in J$ by the induction assumption.  Since $w_1, v_2 \geq h+n$, we have $\Upsilon_n \in S_{h+n}$.  Write $\Upsilon_n= X_{h+n}X_{b_1} \ldots X_{b_{h+n}} \mbox{, where } h+n \leq b_1 \leq \ldots \leq b_{h+n}$.
By Lemma \ref{Sh-Shk}, the element of $S_h$ immediately following $\Upsilon_n$ in the ordering of the algorithm is $\Phi:=X_hX_{b_{n+1}-1}X_{b_{n+2}} \ldots X_{b_{h+n}}$.  We claim that either $M_i=\Phi$, or $\Phi$ precedes $M_i$ in the order produced by the algorithm.  If $v_1+k>v_2$ or $ w_n>v_2$, then, for some $j=2,\ldots, h$, we will have $b_{n+j}> v_j$ and $b_{n+j'} =v_{j'}$ for all $j' \geq j$, making $\Phi$ greater than $M_i$ in the reverse lexicograhic order.  So assume this is not the case.  If $w_n > v_1+k$, then $b_{n+1}-1=w_n-1 > v_1+k-1 >v_1$, again making $\Phi$ greater than $M_i$.  Otherwise, $b_{n+1}-1=v_1+k-1 \geq v_1$, and $\Phi$ is greater than or equal to $M_i$.  This completes the proof.
\end{proof}

The proof of Proposition \ref{Im J} now follows immediately.

\begin{proof} (of Proposition \ref{Im J})  Observe that $I\mathfrak{m}^{g-1}=I \ \cap \ (X_1,\ldots,X_g)\mathfrak{m}^{g}$ by degree reasons.  The result now follows from Corollary \ref{in S} and Proposition \ref{IS J}
\end{proof}

Proposition \ref{IS J} also shows that $J$ is a reduction of $I$, and leads to a bound for the reduction number of $I$ with respect to $J$.  See also \cite{CNPY}.

\begin{prop}\label{J red}  If $J$ is the diagonal ideal associated to $I$ as above, then $I^g = JI^{g-1}$.  That is, $J$ is a reduction of $I$ with reduction number at most $g-1$.

\end{prop}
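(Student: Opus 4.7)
The trivial containment $JI^{g-1} \subseteq I^g$ follows from $J \subseteq I$. For the reverse, since both $I^g$ and $JI^{g-1}$ are graded ideals generated in degree $2g$---the former by products of $g$ degree-$2$ generators of $I$, and the latter by products $e_n \cdot v$ where $e_n$ is a generator of $J$ and $v$ is a generator of $I^{g-1}$---it suffices to show $(I^g)_{2g} \subseteq (JI^{g-1})_{2g}$ as $k$-subspaces of $R_{2g}$.

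Fix a monomial generator $u = m_0 m_1 \cdots m_{g-1}$ of $I^g$ with each $m_i = X_{a_i}X_{b_i}$ (with $a_i \leq b_i$) a degree-$2$ generator of $I$. Set $v = m_1\cdots m_{g-1}$ and split $v = v_1 v_2$, where $v_1 = \prod_{i=1}^{g-1} X_{a_i}$ and $v_2 = \prod_{i=1}^{g-1} X_{b_i}$. Then $M := m_0 v_1$ is a monomial in $I$ of degree $g+1$. By strong stability and $\h I = g$ every $a_i \leq g$, so the smallest variable index in $M$ is at most $g$; hence $M \in (X_1,\ldots,X_g)\mathfrak{m}^g \subseteq (S)_{g+1}$ by Corollary \ref{in S}, and Proposition \ref{IS J} yields $M \in J$. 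I would then trace the inductive construction in the proof of Proposition \ref{IS J} to obtain an explicit decomposition $M = \sum_\alpha e_{n_\alpha} c_\alpha$, where each $e_{n_\alpha}$ generates $J$ and each coefficient $c_\alpha$ is a signed monomial of degree $g-1$ obtained by deleting from $M$ the two variables corresponding to the diagonal of $e_{n_\alpha}$. Multiplying through by $v_2$ yields $u = M v_2 = \sum_\alpha e_{n_\alpha}(c_\alpha v_2)$, which lies in $JI^{g-1}$ provided each $c_\alpha v_2 \in I^{g-1}$.

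The main obstacle is verifying this last claim. The monomial $c_\alpha v_2$ has degree $2g-2$ and is obtained from $u$ by removing a single degree-$2$ divisor (the pair of variables used by $e_{n_\alpha}$), so the question is whether the remaining $2g-2$ variables can always be regrouped into a product of $g-1$ degree-$2$ generators of $I$. This should follow from the strong stability of $I$ together with the specific pairs chosen by the algorithm of Section 4, but it may require a judicious initial choice of the factorization $u = m_0 m_1 \cdots m_{g-1}$, because in certain cases a naive choice produces a $c_\alpha v_2$ that fails to be in $I^{g-1}$ (as the simple example $u = X_1 X_2^2 X_3$ with $I = (X_1^2, X_1X_2, X_1X_3, X_2^2)$ illustrates: the factorization $u = (X_1X_3)(X_2^2)$ works immediately, but $u = (X_2^2)(X_1X_3)$ leads to a coefficient $X_2 X_3 \notin I$). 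As a fallback, one may reorganize the argument as a strong induction on reverse-lexicographic order in $(I^g)_{2g}$: the diagonal relation $\sum_{b=1}^{\beta_n}(X_b X_{b+n-1})v \equiv 0 \pmod{JI^{g-1}}$ rewrites each $u$ modulo $JI^{g-1}$ as a combination of generators $u'$ of $I^g$, and after choosing the substitution carefully (e.g.\ substituting variables with $i > a$ when possible) one has $u' \prec u$; the exceptional cases---where every admissible substitution would only produce larger terms---are handled by switching to the symmetric induction anchored at the lex-largest generator.
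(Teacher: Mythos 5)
You correctly isolate the obstruction: after decomposing $M=m_0 v_1 \in J$ into $\sum_\alpha e_{n_\alpha} c_\alpha$, nothing forces $c_\alpha v_2 \in I^{g-1}$, and your own small example shows a naive factorization fails. But your proposed fixes are not yet an argument. A single ``judicious'' choice of the split $u=m_0 v_1 v_2$ cannot in general simultaneously make every $c_\alpha v_2$ a product of $g-1$ generators of $I$, because the recursion in Proposition~\ref{IS J} produces coefficients $c_\alpha$ drawn from many diagonals and index shifts at once, and controlling them all with one fixed $v_1$ is precisely the obstruction you flagged; the reverse-lex fallback is described too loosely (``choosing the substitution carefully,'' ``switching to the symmetric induction'') to count as a proof. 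As written, this is a genuine gap.

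The paper's proof avoids coefficient bookkeeping entirely, via two moves you do not take. First, because $J$ is generated in degree $2$ while $I\mathfrak{m}^{g-1}$ is generated in degree $g+1$, Proposition~\ref{Im J} automatically strengthens to the ideal containment $I\mathfrak{m}^{g-1}\subseteq J\mathfrak{m}^{g-1}$. Multiplying both sides by $(X_1,\ldots,X_{g-1})^{g-1}$ and using
\[
\big[(X_1,\ldots,X_{g-1})\mathfrak{m}\big]^{g-1}=(X_1,\ldots,X_{g-1})^{g-1}\mathfrak{m}^{g-1}\subseteq I^{g-1}
\]
immediately gives $I\big[(X_1,\ldots,X_{g-1})\mathfrak{m}\big]^{g-1}\subseteq JI^{g-1}$; this ideal-level identity does exactly the job your condition $c_\alpha v_2 \in I^{g-1}$ was supposed to do, with no case analysis. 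Second, the above only covers monomials $N\in I^g$ admitting a factorization with at most one factor of the form $X_gX_s$; the general case is handled by an explicit induction on $b(N)$, the minimal number of such factors across all admissible factorizations of $N$, where the inductive step uses a single diagonal generator $f=\sum_k X_{g-k}X_{s-k}\in J$ to rewrite $N$ modulo $JI^{g-1}$ in terms of monomials with strictly smaller $b$. Without the degree upgrade of Proposition~\ref{Im J} and the $b(N)$-induction, your argument does not close.
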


\begin{proof}  As before we write $I=(X_1,\ldots,X_{g-1})\mathfrak{m} + X_g(X_g,\ldots, X_\nu)$, where $g \leq \nu \leq d$.  Note that, by degree reasons, Proposition \ref{Im J} actually shows that $I \mathfrak{m}^{g-1} \subset J \mathfrak{m}^{g-1}$.  Therefore, since $(X_1,\ldots,X_{g-1})\mathfrak{m} \subset I$,
\begin{align}
& I \big[ (X_1,\ldots,X_{g-1}) \mathfrak{m} \big]^{g-1} \subseteq J \big[ (X_1,\ldots,X_{g-1}) \mathfrak{m} \big]^{g-1} \subset JI^{g-1}.
\end{align}
Consider an element $N \in I^g$.  Write $N=X_{i_1}X_{j_1}X_{i_2}X_{j_2}\ldots X_{i_g}X_{j_g}$, where $X_{i_t}X_{j_t} \in I$ for all $t=1,\ldots,g$ and $i_1,\ldots,i_g \in \{1,\ldots, g\}$.  Additionally, we require that the $i_k$'s and $j_k$'s are chosen in such a way that $b(N):= \left| \{ t \mid i_t=g \} \right|$ is as small as possible. 
We induct on $b=b(N)$ to show that $N \in JI^g$.  If $b \leq 1$, then $N \in I [(X_1,\ldots,X_{g-1})\mathfrak{m}]^{g-1}$, hence equation (2) shows that $N \in JI^{g-1}$.  Now let $b > 1$, and assume that for any monomial $M \in I^g$ with $b(M) < b(N)$, $M \in JI^{g-1}$.  Choose $t$ with $i_t=g$, and write $j_t=s$.  Consider the element $f:=\sum_{k=0}^{g-1}X_{g-k}X_{s-k} \in J$.  For each $k$, set $M_k:=(N/X_gX_s)X_{g-k}X_{s-k} \in I^g$.
Note that for each $k >0$, $b(M_k) < b(N)$, hence $M_k \in JI^{g-1}$ by the induction assumption, while $M_0=N$, which shows that $N \in J$.
\end{proof}

In light of Proposition \ref{J red}, from now on we shall refer to $J$ as the \emph{diagonal reduction} of $I$.  

\end{section}

\begin{section}{A Formula for the Core}

This section will be devoted to proving our main result, which is as follows:

\begin{thm}\label{core thm} Let $I \subset R$ be a strongly stable ideal of degree two which has the $G_d$ property.  Let $\h I=g$, and let $\mathfrak{m}=(X_1,\ldots,X_d)$.  Then $\core=I \mathfrak{m}^{g-1}$.

\end{thm}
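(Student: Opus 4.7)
The strategy is to invoke the Polini-Ulrich characterization of the core for monomial ideals, stated in Section~2. Because $I$ satisfies the $G_d$ property (Proposition~\ref{Gd}), the Artin-Nagata property $AN_{d-1}$ (Theorem~\ref{ANI}) and hence is weakly $(d-1)$-residually $S_2$, has analytic spread $\ell(I)=d$ (Villarreal's result cited after Assumption~\ref{ring}), and is equigenerated in degree two, we have $\core = \operatorname{mono}(K)$ for $K=(f_1,\ldots,f_d)$ any general minimal reduction of $I$. Since $I$ is equigenerated, $I\mathfrak{m}^{g-1}$ coincides with the truncation of $I$ in degrees $\geq g+1$, i.e., $(I\mathfrak{m}^{g-1})_n = I_n$ for $n \geq g+1$ and $(I\mathfrak{m}^{g-1})_n = 0$ for $n \leq g$, so the theorem amounts to pinning down $\operatorname{mono}(K)$ degree by degree.

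The forward containment $I\mathfrak{m}^{g-1} \subseteq \core$ I would establish by combining Proposition~\ref{Im J} with a semicontinuity argument. Writing $W \subseteq I_2$ for the linear span of $f_1,\ldots,f_d$ and $J_2$ for the span of the generators of the diagonal reduction, we have $K_{g+1} = W R_{g-1} \subseteq I_2 R_{g-1} = I_{g+1}$. Proposition~\ref{Im J} forces $J_2 R_{g-1} = I_{g+1}$, so the maximum value of $\dim W R_{g-1}$ as $W$ varies over the Grassmannian of $d$-dimensional subspaces of $I_2$ is $\dim I_{g+1}$; by lower-semicontinuity of the rank of a family of linear maps, a generic $W$ also realizes this maximum, and hence $K_{g+1} = I_{g+1}$. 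Ideal multiplication then gives $K_n \supseteq I_n$ for all $n \geq g+1$, and because $I$ is monomial this yields $\operatorname{mono}(K) \supseteq I\mathfrak{m}^{g-1}$.

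For the reverse containment $\core \subseteq I\mathfrak{m}^{g-1}$, I would use strong stability of $\core$ (Proposition~\ref{core ss}). Since $\core = \operatorname{mono}(K)$ and $I\mathfrak{m}^{g-1}$ are both strongly stable, it suffices to show each Borel generator of $\core$ lies in $I\mathfrak{m}^{g-1}$; equivalently, that no Borel generator of $\operatorname{mono}(K)$ has degree $\leq g$. Given a candidate monomial $M$ of degree $n \leq g$, the condition $M \in K$ becomes a linear system in the coefficients appearing in an expansion $M = \sum_{i,m} \lambda_{im} X_m f_i$, with $m$ ranging over degree-$(n-2)$ monomials; a dimension count controlled by the shape of the tableau $T_I$ and the $G_d$ hypothesis $X_{g-1}X_d \in I$ (Proposition~\ref{Gd}) shows that for $n \leq g$ this system is overdetermined and admits no solution for generic $f_i$'s, forcing $M \notin K$. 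The main obstacle is this overdetermined-system analysis, which must be carried out uniformly across all $n \leq g$ and all candidate monomials $M$; the tableau combinatorics and the specific role of $X_{g-1}X_d$ in keeping the count tight are both essential, and this is where the non-trivial strong-stability input to the proof will pay off.
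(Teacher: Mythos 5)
Your forward inclusion takes a genuinely different route from the paper and it is essentially sound. The paper argues via residual intersections: since $J$ and $K$ are both minimal reductions generated in degree two, the ideals $J:I$ and $K:I$ are both $d$-residual intersections of $I$, and by a result of Chardin--Eisenbud--Ulrich the quotients $R/(J:I)$ and $R/(K:I)$ share the same Hilbert series; Proposition~\ref{Im J} then transfers $\mathfrak{m}^{g-1}\subset J:I$ to $\mathfrak{m}^{g-1}\subset K:I$. Your semicontinuity-of-rank argument on $W\mapsto\dim(W R_{g-1})$ over $\mathrm{Gr}(d,I_2)$ gets the same conclusion $K_{g+1}=I_{g+1}$ at the level of graded pieces, hence $K_n=I_n$ for $n\geq g+1$, and avoids citing the residual-intersection Hilbert series machinery. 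You should make explicit the small point that ``$W$ is a minimal reduction'' and ``$\dim W R_{g-1}=\dim I_{g+1}$'' are each non-empty Zariski-open conditions on the Grassmannian (the diagonal reduction $J$ is the witness for the latter, by Proposition~\ref{Im J}), so they intersect and a general $K$ satisfies both. With that fleshed out, this direction is a correct and arguably more elementary alternative.

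Your reverse inclusion, however, has a genuine gap, and you have also made it harder than it needs to be. You propose to show that no monomial of degree $\leq g$ lies in $\operatorname{mono}(K)$ for a \emph{generic} $K$ via an ``overdetermined linear system'' count, but you explicitly defer that analysis as ``the main obstacle'' --- and an overdetermined system does not automatically fail to have solutions, so the count alone is not an argument. More importantly, there is no need to work with a generic reduction at all: since $\core$ is the intersection of all reductions, $\core\subseteq J$ automatically for the explicit diagonal reduction $J$, so it suffices to show that $J$ contains no element of degree $g$. Using strong stability of $\core$ (Proposition~\ref{core ss}) as you suggest, this reduces to the single assertion $X_1^g\notin J$. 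The paper then reduces modulo $(X_{g+1},\ldots,X_d)$ to the diagonal reduction $J'$ of $(X_1,\ldots,X_g)^2$ in $R'=k[X_1,\ldots,X_g]$, and the key non-trivial ingredient --- which your sketch does not identify --- is Proposition~\ref{socle J}: the Northcott-ideal determinant calculation showing that $\operatorname{Soc}(R'/J')=(X_1^g+J')/J'$. Since $R'/J'$ is Artinian, its socle is nonzero, forcing $X_1^g\notin J'$. Without some substitute for this concrete socle computation, the reverse inclusion does not go through; this is precisely where the hard work in the paper lives.
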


We shall prove the inclusion $\core \subseteq I \mathfrak{m}^{g-1}$ by reducing to the $\mathfrak{m}$-primary case, then computing the socle of the diagonal reduction $J$ by viewing it as a Northcott matrix.  We perform this computation, then give the proof of Theorem \ref{core thm}.  Recall the following fact about Northcott ideals, which were first studied in \cite{DG}.  Let $K=(a_1,\ldots,a_n)$ and $L=(b_1,\ldots,b_n)$ be $R$-ideals generated by regular sequences, with $L \subset K$.  Let $A$ be an $n \times n$ matrix with 
\[ A
\begin{pmatrix}
a_1 \\
\vdots \\
 a_n
\end{pmatrix}
=
\begin{pmatrix}
b_1 \\
\vdots \\
 b_n
\end{pmatrix}.
\]
Then $L:K=( \det(A), L)$.  We shall use this fact to compute Soc$(R/J)$, where $J$ is the diagonal reduction of the strongly stable ideal $\mathfrak{m}^2$.  We first give two easy lemmas which we shall use in our computation of the socle.   

\begin{lem}\label{in soc}  Let $J \subset R$ be the diagonal reduction of $\mathfrak{m}^2$, where $\mathfrak{m}=(X_1,\ldots,X_d)$.  Then for each $h=1,\ldots,d$, $X_1^{h}X_{d-h+1} \in J$.  

\end{lem}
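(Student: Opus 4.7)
The plan is to induct on $h$. First note that for $I = \mathfrak{m}^2$, every monomial $X_bX_{b+n-1}$ with $b+n-1\le d$ lies in $I$, so $\beta_n = d-n+1$ and the diagonal reduction is
\[
J = (g_1,\ldots,g_d), \qquad g_n := \sum_{k=1}^{d-n+1} X_k X_{k+n-1}.
\]
In particular $g_d = X_1X_d$, which disposes of the base case $h=1$ for free.

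For the inductive step, assume $X_1^{h'}X_{d-h'+1} \in J$ for every $1\le h' < h$. The key identity to exploit is
\[
X_1^{h-1} g_{d-h+1} \;=\; \sum_{j=1}^{h} X_1^{h-1} X_j X_{d-h+j},
\]
whose $j=1$ summand is exactly the target monomial $X_1^h X_{d-h+1}$. Thus it suffices to show that each of the remaining $h-1$ mixed summands (for $2\le j\le h$) already lies in $J$; the result then follows by rearranging.

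The crucial combinatorial observation will be the factorization
\[
X_1^{h-1} X_j X_{d-h+j} \;=\; X_j\,X_1^{\,j-2}\cdot\bigl(X_1^{\,h-j+1}\, X_{d-h+j}\bigr),
\]
valid for every $j$ with $2\le j\le h$. The parenthesized factor has the form $X_1^{h'}X_{d-h'+1}$ with $h' = h-j+1$, and as $j$ runs over $\{2,\ldots,h\}$ the value $h'$ runs over $\{1,\ldots,h-1\}$. So the inductive hypothesis delivers each such factor inside $J$, and therefore every $j\ge 2$ summand of the expansion lies in $J$.

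The main obstacle is precisely spotting this factorization: the naive move of pulling $X_j$ out first leaves $X_1^{h-1}X_{d-h+j}$, which is \emph{not} of the inductive form. The right bookkeeping is to keep $X_{d-h+j}$ paired with exactly $h-j+1$ copies of $X_1$, so that the leftover matches the shape demanded by the induction hypothesis. Once this pairing is identified everything else is routine.
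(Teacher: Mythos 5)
Your proposal is correct and takes essentially the same approach as the paper: multiply the generator $g_{d-h+1}$ of $J$ by $X_1^{h-1}$, note the $j=1$ term is the target monomial, and absorb the remaining terms via the inductive hypothesis. The only difference is that you spell out explicitly the factorization $X_1^{h-1}X_jX_{d-h+j}=X_jX_1^{j-2}\bigl(X_1^{h-j+1}X_{d-h+j}\bigr)$ needed to see that each leftover term is a multiple of $X_1^{h'}X_{d-h'+1}$ with $h'=h-j+1<h$, which the paper states without proof as an immediate consequence of the induction hypothesis.
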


\begin{proof}  We induct on $h$, the case of $h=1$ being clear.  Let $h > 1$, and assume the claim holds for all $h'<h$.  The element $\phi:= X_1^{h-1}(X_1X_{d-h+1}+ X_2X_{d-h+2}+ \ldots +X_hX_d)$ is in $J$, and by the induction assumption each term of $\phi - X_1^hX_{d-h+1}$ is in $J$.  Therefore $X_1^hX_{d-h+1} \in J$ 
\end{proof}

The next result will allow us to induct on the dimension $d$ of the ring $R$.

\begin{lem}\label{soc lem}  Let $J \subset R$ be as above.  Let $S=k[X_1,\ldots,X_{d-1}]$ and let $K \subset S$ be the diagonal reduction of $(X_1,\ldots,X_{d-1})^2$.  Then for every $\alpha \in K$, $X_1\alpha \in J$.  

\end{lem}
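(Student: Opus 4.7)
The plan is to work directly with the explicit generators of $J$ and $K$. Since $\mathfrak{m}^2$ contains every degree-two monomial in $X_1,\ldots,X_d$, the parameter $\beta_n$ appearing in the definition of the diagonal reduction is as large as possible, so $J=(f_1,\ldots,f_d)$ with
\[
f_n \;=\; \sum_{j=1}^{d-n+1} X_j X_{j+n-1}.
\]
Similarly, $K=(g_1,\ldots,g_{d-1})$ in $S$ with $g_n=\sum_{j=1}^{d-n} X_j X_{j+n-1}$. Comparing these sums term by term, the only difference is the last summand of $f_n$, yielding the identity $g_n=f_n - X_{d-n+1}X_d$ for $n=1,\ldots,d-1$.

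The key observation that unlocks the lemma is that $f_d=X_1X_d$ is itself a generator of $J$. Multiplying the identity above by $X_1$ therefore gives
\[
X_1 g_n \;=\; X_1 f_n \;-\; X_1 X_{d-n+1} X_d \;=\; X_1 f_n \;-\; X_{d-n+1} f_d,
\]
and both terms on the right lie in $J$. Hence $X_1 g_n\in J$ for each $n=1,\ldots,d-1$.

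To finish, I would note that any $\alpha\in K$ can be written as $\alpha=\sum_{n=1}^{d-1} s_n g_n$ with $s_n\in S\subset R$; then $X_1\alpha=\sum_n s_n(X_1 g_n)\in J$ by $R$-linearity. There is no real obstacle here: once the generators are written down, the identification $g_n=f_n - X_{d-n+1}X_d$ and the appearance of $f_d=X_1X_d$ among the generators of $J$ are both immediate, and the lemma drops out in a few lines. This is consistent with its role in the paper, namely as a setup step enabling induction on the dimension $d$ in the subsequent socle computation.
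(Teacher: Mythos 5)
Your proof is correct and follows essentially the same approach as the paper: both compute $g_n = f_n - X_{d-n+1}X_d$ from the explicit generators, multiply by $X_1$, and observe that $X_1 X_d$ is itself in $J$ (you explicitly identify it as $f_d$, which the paper leaves implicit). The only addition is your final paragraph spelling out the $R$-linearity step to pass from generators $g_n$ to arbitrary $\alpha \in K$, which the paper takes for granted.
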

 
\begin{proof} Write $J=(f_1,\ldots,f_{d})$, where $f_i=\sum_{j=1}^{d-i+1}X_jX_{j+i-1}$, and write $K=(g_1,\ldots,g_{d-1})$, where $g_i=\sum_{j=1}^{d-i}X_jX_{j+i-1}$.  Observe that for each $i=1,\ldots, d-1$, $f_i=g_i+X_{d-(i-1)}X_d$.  Thus for each $i$, $X_1g_i=X_1f_i-X_1(X_{d-(i-1)}X_d)=X_1f_i-(X_1X_d)X_{d-(i-1)} \in J$.  
\end{proof}

We are now ready to compute Soc$(R/J)= (J:\mathfrak{m})/J$.  

\begin{prop}\label{socle J} Let $J \subset R$ be the diagonal reduction of $\mathfrak{m}^2$, where $\mathfrak{m}=(X_1,\ldots,X_d)$.  Then Soc$(R/J)=( X_1^d+J)/J$.

\end{prop}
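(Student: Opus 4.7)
The plan is to identify $J$ as a Northcott ideal via an explicit $d \times d$ matrix $A$, compute $\det A$ to produce a socle generator, and then transport that generator to the stated $X_1^d$ using a reversal symmetry of $J$.

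First I would verify that the hypotheses of the Northcott formula recalled just above are met. Since $J$ is a reduction of the $\mathfrak{m}$-primary ideal $\mathfrak{m}^2$ by Proposition \ref{J red}, $J$ is itself $\mathfrak{m}$-primary, so $\h J = d$. Being generated by $d$ elements in the $d$-dimensional Cohen--Macaulay ring $R$, the generators $f_1, \ldots, f_d$ must then form a regular sequence. Since $\mathfrak{m}$ is also generated by a regular sequence and $J \subseteq \mathfrak{m}$, the cited formula applies.

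Next I would pin down the matrix. Rewriting each $f_i = \sum_{j=1}^{d} A_{ij} X_j$ forces $A_{ij} = X_{i+j-1}$ when $i+j \le d+1$ and $A_{ij}=0$ otherwise, so that $A \cdot (X_1, \ldots, X_d)^T = (f_1, \ldots, f_d)^T$. Northcott's formula then yields $J : \mathfrak{m} = J + (\det A)$. To compute $\det A$, I would observe that a permutation $\sigma \in S_d$ contributes to the Leibniz expansion only when $i + \sigma(i) \le d+1$ for every $i$; summing these $d$ inequalities gives $d(d+1) = \sum_i (i + \sigma(i)) \le d(d+1)$, forcing equality termwise, so only the reversal $\sigma(i) = d+1-i$ contributes. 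Each factor $A_{i, d+1-i}$ equals $X_d$, so $\det A = \pm X_d^d$ and hence $J : \mathfrak{m} = J + (X_d^d)$.

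Finally, I would exploit the ring automorphism $\tau : R \to R$ defined by $\tau(X_i) = X_{d+1-i}$. A direct change-of-summation-index check shows $\tau(f_i) = f_i$ for each $i$, so $\tau(J) = J$ and $\tau(\mathfrak{m}) = \mathfrak{m}$. Applying $\tau$ to both sides of $J : \mathfrak{m} = J + (X_d^d)$ yields $J : \mathfrak{m} = J + (X_1^d)$, that is, $\mathrm{Soc}(R/J) = (X_1^d + J)/J$, as required.

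I expect the main technical step to be the determinant computation, but the sparsity pattern of $A$ is rigid enough that only one permutation has any chance of contributing, so $\det A = \pm X_d^d$ falls out at once. The remainder is a direct application of Northcott's formula combined with a clean reversal-symmetry argument.
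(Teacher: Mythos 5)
Your proof is correct, and it takes a genuinely different — and noticeably cleaner — route than the paper's. You apply the same Northcott formula, but differ at two decisive points. First, the paper uses the \emph{symmetric} matrix $a_{ij} = \tfrac12\!\left(X_{j-(i-1)}+X_{j+(i-1)}\right)$ (with out-of-range indices zeroed), whose determinant it identifies only after a delicate double induction on $d$, threading the parity-dependent sign $(-1)^c$ through Lemmas \ref{in soc} and \ref{soc lem}. You instead choose the Hankel-type, anti-upper-triangular matrix $A_{ij}=X_{i+j-1}$ (for $i+j\le d+1$, zero otherwise); since the choice of $A$ with $A(X_1,\ldots,X_d)^T=(f_1,\ldots,f_d)^T$ is not unique, this is legitimate — $\det A$ is determined modulo $J$ by the Northcott isomorphism — and your support-counting argument ($\sum_i(i+\sigma(i))=d(d+1)$ versus $i+\sigma(i)\le d+1$ termwise) kills every permutation except the reversal, giving $\det A=\pm X_d^d$ in one line. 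Second, where the paper proves $X_1^d\equiv(-1)^cX_d^d\pmod J$ by an explicit telescoping chain of congruences, you get the same conversion for free from the order-reversing automorphism $\tau(X_i)=X_{d+1-i}$, since a reindexing shows $\tau(f_i)=f_i$ for every $i$, so $\tau$ fixes both $J$ and $\mathfrak m$ and carries $(J,X_d^d)$ to $(J,X_1^d)$. The only (cosmetic) imprecision is the word ``forces'': the decomposition $f_i=\sum_j A_{ij}X_j$ does not force your $A$, it merely permits it; ``we may take'' would be accurate. As a side benefit, your matrix avoids the factor $\tfrac12$ and would survive in characteristic two, though that is moot under Assumptions \ref{ring}.
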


\begin{proof}  This is clear in the case of $d=1$, so let $d \geq 2$.  Write $J=(f_1,\ldots,f_d)$.  Since $J:\mathfrak{m}$ is a Northcott ideal, it suffices to compute $\det(A)$ up to equivalence$\pmod{J}$, where $A$ is the $(d \times d)$ matrix such that 
\[ A
\begin{pmatrix}
X_1 \\
\vdots \\
 X_d
\end{pmatrix}
=
\begin{pmatrix}
f_1 \\
\vdots \\
 f_d
\end{pmatrix}.
\]
For conciseness, we shall refer to such a matrix as \emph{the matrix corresponding to $J$}.  Let $(a_{ij})$ be the $d \times d$ matrix given by $a_{ij}=\frac{1}{2}\left(X_{j-(i-1)}+X_{j+(i-1)}\right), \ \ i,j=1,\ldots, d$,
where we adopt the convention that $X_{j-(i-1)}=0$ if $j-(i-1) < 1$ and $X_{j+(i-1)}=0$ if $j+(i-1) > d$.  We claim that $A=(a_{ij})$, that is, for each $i=1,\ldots, d$, $\sum_{j=1}^d a_{ij}X_j=f_i$.  Recall that, for each $i=1,\ldots,d$, $f_i=\sum_{j=1}^{d-(i-1)}X_jX_{j+(i-1)}$.  Therefore, for each $i$, 
\begin{align}
\sum_{j=1}^d a_{ij}X_j= & \sum_{j=1}^d\left(\frac{1}{2}X_{j-(i-1)}\right)X_j+ \sum_{j=1}^{d}\left(\frac{1}{2}X_{j+(i-1)}\right)X_j \notag \\
= & \sum_{j=i}^d\left(\frac{1}{2}X_{j-(i-1)}\right)X_j+ \sum_{j=1}^{d-(i-1)}X_j\left(\frac{1}{2}X_{j+(i-1)}\right) \notag \\
 = & \frac{1}{2} \left(\sum_{j=1}^{d-(i-1)}X_{j}X_{j+(i-1)} \right)+ \frac{1}{2}\left( \sum_{j=1}^{d-(i-1)}X_jX_{j+(i-1)}\right) = f_i, \notag 
\end{align}  
which shows the claim.  We will prove the following:
\begin{align}
\det(A) & \equiv X_1^d \pmod{J}, \mbox{ and } \\
X_1^d & \equiv (-1)^cX_d^d \! \pmod{J}, \mbox{ where } c=
\begin{cases}
0, \ d \equiv 0 \mbox{ or } 1 \! \pmod{4} \\
1, \ d \equiv 2 \mbox{ or } 3 \! \pmod{4}
\end{cases}.
\end{align}

We induct on $d$.  If $d=2$, we have $A= \left( \begin{array}{c c} X_1 & X_2 \\ \frac{1}{2}X_2 & \frac{1}{2}X_1 \end{array} \right)$ and $J=(X_1^2+X_2^2,X_1X_2)$.  Thus $\det A= \frac{1}{2}(X_1^2-X_2^2) \equiv X_1^2 \equiv -X_2^2 \pmod{J}$ as claimed.

Now let $d>2$, and assume the equivalences in equations (3) and (4) hold in a polynomial ring in $d-1$ variables.  Let $K$ be the diagonal reduction of the ideal $(X_1,\ldots,X_{d-1})^2$ in the ring $S=k[X_1,\ldots,X_{d-1}]$, and let $B$ be the matrix corresponding to $K$.  Let $L$ the diagonal reduction of $(X_2,\ldots,X_d)^2$ in the ring $T=k[X_2,\ldots,X_d]$, and let $C$ be the matrix corresponding to $L$.  Let $A'$ be the upper right hand $(d-1) \times (d-1)$ minor of $A$, and let $A''$ be the upper left hand $(d-1) \times (d-1)$ minor of $A$. Observe that each entry of the $(d-1) \times (d-1)$ matrix $A''-B$ is a multiple of $X_d$.  Therefore $\det(A'')-\det(B)$ is a multiple of $X_d$.  Similarly $\det(A')-\det(C)$ is a multiple of $X_1$.  Therefore $X_1\det(A'') \equiv X_1\det(B) \pmod{J}$, and $X_d \det(A') \equiv X_d\det(C) \pmod{J}$, since $X_1X_d \in J$.  The $d$th row of $A$ is $( \frac{1}{2}X_d ,0, \ldots,  0, \frac{1}{2}X_1 )$.  Therefore $\det(A)= \frac{1}{2}(X_d \det(A') + (-1)^{d-1} X_1 \det(A''))
 \equiv \frac{1}{2}( X_d\det(C) + (-1)^{d-1} X_1\det(B)) \pmod{J}$.  By the induction assumption, $\det(C)\equiv  (-1)^{c'}X_d^{d-1} \pmod{K}$ and $\det(B) \equiv X_1^{d-1} \pmod{L}$, where
\[c'=
\begin{cases}
0, \ d-1 \equiv 0 \mbox{ or } 1 \! \pmod{4} \\
1, \ d-1 \equiv 2 \mbox{ or } 3 \! \pmod{4}
\end{cases}. \]   
By Lemma \ref{soc lem}, $X_1 \det(B) \equiv X_1^{d} \pmod{J}$ and $X_d\det(C) \equiv  (-1)^{c'}X_d^{d} \pmod{J}$ (through symmetry);  hence 
\begin{align}
\det(A) \equiv  \frac{1}{2}\left( (-1)^{c'}X_d^{d}+(-1)^{d-1}X_1^d \right) \pmod{J}.
\end{align}  

Now fix $i$ and consider the element $\Psi_i:= X_1^{d-i-1}X_2^{i-1}(X_1X_{i} +X_2X_{i+1} +X_3X_{i+2}+ \ldots+X_{d+1-i}X_d) \in J$.
Recall that by Proposition \ref{in soc}, for each $h=1,\ldots,d$, $X_1^{h}X_{d-h+1} \in J$.  In particular, $X_1^{d-i-1}X_t\in J$ for all $t \geq i+2$, so that all but the first two terms of $\Psi_i$ are in $J$.  Thus, for each $i=1,\ldots,d-1$, $X_1^{d-i}X_2^{i-1}X_i \equiv -X_1^{d-i-1}X_2^iX_{i+1} \pmod{J}$.  This gives a string of $d-1$ congruences $X_1^d \equiv -X_1^{d-2}X_2^2 \equiv X_1^{d-3}X_2^2X_3 \equiv X_1^{d-4}X_2^3X_4 \equiv \ldots \equiv (-1)^{d-1}X_2^{d-1}X_d \pmod{J}$.
Furthermore, by the induction assumption, $X_2^{d-1} \equiv (-1)^{c'}X_d^{d-1} \pmod{L}$.  Therefore, by Lemma \ref{soc lem}, $X_1^d \equiv (-1)^{d-1}X_2^{d-1}X_d \equiv (-1)^{d-1+c'}X_d \pmod{J}$,
and it is straightforward to verify that $(-1)^{d-1+c'}=(-1)^c$ for any value of $d \pmod{4}$.  We have now proven the equivalence from equation (4).  Finally, the equivalence of (3) follows from equations (4) and (5).
\end{proof}

We are now ready to prove Theorem \ref{core thm}.

\begin{proof} (Of Theorem \ref{core thm})  We first show that $I \mathfrak{m}^{g-1} \subseteq $ mono$(K)$, where $K$ is a general minimal reduction of $I$.  By Theorem \ref{ANI} and \cite{Monom}, this will imply that $I \mathfrak{m}^{g-1} \subseteq \core$.  (The statement of the theorem of Polini and Ulrich which we cite here is given in Section 2.)  Since $I \mathfrak{m}^{g-1}$ is a monomial ideal, it suffices to show that $I \mathfrak{m}^{g-1} \subseteq K$, or equivalently, that $\mathfrak{m}^{g-1} \subseteq K:I$.  We will show that the Hilbert function of $R/(K:I)$ is zero in degree $g-1$.  Let $J$ be the diagonal reduction of $I$ which we studied in Section 3.  As $J$ and $K$ are both minimal reductions of $I$,  by \cite{Hadley} (see also the rephrasing of this theorem in \cite{MarkJ}), the ideals $K:I$ and $J:I$ are both $d$-residual intersections of $I$.  Note also that $J$ and $K$ are both generated in degree 2.  Therefore,by \cite{CEU}, $R/(K:I)$ and $R/(J:I)$ have the same Hilbert series.  Therefore it suffices to show that $\mathfrak{m}^{g-1} \subset J:I$, which is precisely what we have shown in Proposition \ref{Im J}.  

We now prove that $\core \subseteq I\mathfrak{m}^{g-1}$.  Note that $I\mathfrak{m}^{g-1}=I \cap \mathfrak{m}^{g+1}$ by degree reasons, and recall that core$(I) \subseteq I$ for any ideal $I$.  Therefore, it suffices to show that core$(I) \subseteq \mathfrak{m}^{g+1}$.  Equivalently, we must show that core$(I)$ contains no element of degree $g$.  As the core of a strongly stable ideal is strongly stable by Proposition \ref{core ss}, it suffices to show that $X_1^g \notin $ $\core$.  We will show that $X_1^g$ is not contained in the diagonal reduction $J$ of $I$.  This reduces to showing that $X_1^g \notin J'$, the diagonal reduction of the ideal $(X_1,\ldots,X_g)^2 \subset R'=k[X_1,\ldots,X_g] \cong R/(X_{g+1},\ldots,X_d)$, as $J'=J+(X_{g+1},\ldots,X_d)/(X_{g+1},\ldots,X_d)$.  The ring $R'/J'$ is Artinian, and hence must have a nonzero socle.  Therefore, as Proposition \ref{socle J} shows that the socle of $R'/J'$ is $(X_1^g +J')/J'$, we conclude that $X_1^g \notin J'$.
\end{proof}

\end{section}

\section*{Acknowledgments}

This work was carried out largely while the author was a doctoral student at the University of Notre Dame.  She wishes to extend her deep gratitude to her thesis adviser, Claudia Polini, for all of her guidance and support, both mathematical and otherwise.  She is grateful to Bernd Ulrich for helpful mathematical discussions, and she thanks Claudia Polini and Bernd Ulrich for sharing a result of theirs in \cite{Monom} prior to its submission for publication.  Finally, she thanks her postdoctoral mentor, Alberto Corso, for suggesting this problem to her, and for much support and help in the final preparation of the manuscript since her arrival at the University of Kentucky.

\bibliographystyle{abbrv}

\bibliography{SS2_refs}

\end{document}